\newcommand{\xmod}{a}
\newcommand{\ymod}{b}
\newcommand{\zmod}{c}
\begin{document}
\title[Interpreting Patterson--Sullivan measures as Hausdorff and packing measures]{On interpreting Patterson--Sullivan measures of geometrically finite groups as Hausdorff and packing measures}

\authordavid

\keywords{Patterson--Sullivan measures, geometrically finite Kleinian groups, Hausdorff and packing measures}

\begin{abstract}
We provide a new proof of a theorem whose proof was sketched by Sullivan ('82), namely that if the Poincar\'e exponent of a geometrically finite Kleinian group $G$ is strictly between its minimal and maximal cusp ranks, then the Patterson--Sullivan measure of $G$ is not proportional to the Hausdorff or packing measure of any gauge function. This disproves a conjecture of Stratmann ('97, '06).
\end{abstract}
\maketitle

\section{Introduction}
Let $G$ be a nonelementary geometrically finite Kleinian group with at least one cusp.\footnote{In this paper, a \emph{Kleinian group} is a discrete subgroup of $\Isom(\B^d)$ for any $d\geq 2$, where $\B^d$ denotes $d$-dimensional hyperbolic space.} Let $\kmin$ and $\kmax$ denote the smallest and largest cusp ranks, respectively, and let $\delta$ denote the Poincar\'e exponent of $G$. Let $\Lambda$ be the limit set of $G$, and let $\mu$ be the Patterson--Sullivan measure of $G$, i.e. the unique probability measure on $\Lambda$ which is $\delta$-conformal with respect to $G$. It is known \cite[Theorem 2]{Sullivan_entropy} that
\begin{align*}
\mu \propto \HH^\delta\given\Lambda &\;\;\Leftrightarrow\;\; \delta \geq \kmax \text{ and}\\
\mu \propto \PP^\delta\given\Lambda &\;\;\Leftrightarrow\;\; \delta \leq \kmin.
\end{align*}
Here $\HH^\delta$ and $\PP^\delta$ denote the Hausdorff and packing measures, respectively, in dimension $\delta$, and $\propto$ is the proportionality symbol. The above conditions can be thought of as giving a ``geometric interpretation'' of the Patterson--Sullivan measure $\mu$ in the cases $\delta\geq \kmax$ and $\delta\leq \kmin$, since the measures $\HH^\delta\given\Lambda$ and $\PP^\delta\given\Lambda$ are defined using only the metric structure of $\Lambda$, without reference to the group $G$.


If $\kmin < \delta < \kmax$, then the above geometric interpretations fail, but it is natural to ask whether $\mu$ is proportional to $\HH^\psi\given\Lambda$ or $\PP^\psi\given\Lambda$ for some Hausdorff gauge function $\psi$ (cf. \6\ref{subsectionRTT}). In fact, this question has a confusing history: Sullivan originally asked it in 1982 \cite[p.71]{Sullivan_discrete_conformal_groups} and then sketched a proof of a negative answer later that year \cite[second Corollary on p.235]{Sullivan_disjoint_spheres} (see also \cite[Remark (2) on p.261]{Sullivan_entropy}). Over a decade later, and without referencing Sullivan's sketch, Stratmann stated that the question was open on two separate occasions \cite[p.68]{Stratmann5}, \cite[p.235]{Stratmann6}, even conjecturing that the answer is positive for the gauge function
\begin{equation}
\label{conjecturedphi}
\psi(r) = r^\delta \exp\left(\frac{\kmax - \delta}{2\delta - \kmax}\left(\log\log\frac{1}{r} + \log\log\log\log\frac{1}{r}\right)\right).\footnote{Since Stratmann was working in $\B^3$ only, in his formula, $\kmin = 1$ and $\kmax = 2$.}
\end{equation}
(Cf. \cite{BishopJones2}, where a similar result is proven for analytically finite but geometrically infinite groups.) In 2011 Ala-Mattila, a doctoral student of Tukia, noticed these inconsistencies in the literature in his dissertation \cite[p.105]{Ala-Mattila}, and stated that he was not able to follow Sullivan's proof. (Nevertheless, his thesis answers a different but related question; namely, it provides a dynamics-independent construction of the Patterson--Sullivan measure, and thus a ``geometric interpretation'' of the Patterson--Sullivan measure in the sense described above.)

The author of the present paper confesses that he is also not able to follow Sullivan's proof, so based on the above, it would seem that the question is still open. The purpose of this note is to prove using different methods that Sullivan's answer is correct. Specifically, we prove the following theorem:

\begin{theorem}
\label{theorem1}
Let $G$ be a nonelementary geometrically finite Kleinian group with at least one cusp, let $\psi$ be a Hausdorff gauge function, and let $\Psi:\R\to\R$ be the unique function satisfying
\begin{equation}
\label{Psidef}
\psi(r) = r^\delta \exp\left(\Psi\left(\log\frac{1}{r}\right)\right),
\end{equation}
where $\delta$ is the Poincar\'e exponent. Assume that
\begin{equation}
\label{Psiassumption}
\text{$\Psi$ is eventually differentiable and monotonic, and the limit $\lim_{t\to\infty}\Psi'(t)$ exists.}
\end{equation}
Let $\kmin$ and $\kmax$ denote the smallest and largest cusp ranks, respectively. Then
\begin{itemize}
\item[(i)] If $\delta < \kmax$, then $\HH^\psi(\mu) = 0$ or $\infty$, according to whether the series
\begin{equation}
\label{Hausdorffseries}
\sum_{t = 1}^\infty \exp\left(-\frac{2\delta - \kmax}{\kmax - \delta}\Psi(t)\right)
\end{equation}
diverges or converges, respectively.
\item[(ii)] If $\delta > \kmin$, then $\PP^\psi(\mu) = 0$ or $\infty$, according to whether the series
\begin{equation}
\label{packingseries}
\sum_{t = 1}^\infty \exp\left(\frac{2\delta - \kmin}{\delta - \kmin}\Psi(t)\right)
\end{equation}
converges or diverges, respectively.
\end{itemize}
In particular, if $\kmin < \delta < \kmax$ then neither $\HH^\psi(\mu)$ nor $\PP^\psi(\mu)$ is positive and finite for any Hausdorff gauge function $\psi$ satisfying \eqref{Psiassumption}, so the Patterson--Sullivan measure cannot be interpreted geometrically as a Hausdorff or packing measure of such a gauge function. (See also Corollary \ref{corollary1} below.)
\end{theorem}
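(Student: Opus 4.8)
The plan is to reduce the behaviour of $\HH^\psi(\mu)$ and $\PP^\psi(\mu)$ to a pointwise comparison between the gauge function $\psi$ and the local scaling behaviour of the Patterson--Sullivan measure, and then to exploit the fact that this local behaviour fluctuates in a precisely controlled way along the Dirichlet-type geodesics that approach cusps. The starting point is the Rogers--Taylor--type density theorems: for a measure $\mu$ and a gauge function $\psi$, one has $\HH^\psi(\mu) = 0$ if $\limsup_{r\to 0}\mu(B(x,r))/\psi(r) = \infty$ for $\mu$-a.e.\ $x$, and $\HH^\psi(\mu) = \infty$ if this $\limsup$ is finite (indeed zero, or bounded) on a positive-measure set; dually for packing measure with $\liminf$. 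So the whole theorem becomes a statement about the almost-sure asymptotics of the ratio $\mu(B(x,r))/\psi(r) = \mu(B(x,r))r^{-\delta}\exp(-\Psi(\log 1/r))$ as $r\to 0$.

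Next I would bring in the global measure formula of Stratmann--Velani, which says that for $x\in\Lambda$ and $r = e^{-t}$,
\[
\mu(B(x,e^{-t})) \asymp e^{-\delta t}\, e^{-(\kappa(x,t) - \delta)\,\rho(x,t)},
\]
where $\rho(x,t)$ is the ``depth in the cusp excursion'' of the geodesic ray toward $x$ at time $t$, and $\kappa(x,t)$ is the rank of the cusp being visited (and $\rho = 0$ away from cusp excursions). Combining this with the Rogers--Taylor criterion, $\HH^\psi(\mu)$ and $\PP^\psi(\mu)$ are governed by whether $\kappa\,\rho - \Psi(t)$ (suitably signed) is bounded or tends to $+\infty$ along almost every ray. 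Because $\delta > \kmin$ forces deep excursions into the minimal-rank cusps (the ``$\PP$'' side) and $\delta < \kmax$ forces deep excursions into the maximal-rank cusps (the ``$\HH$'' side), the problem splits into the two stated cases, and in each case only the extreme cusp rank matters for the $\limsup$/$\liminf$.

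The core of the argument is then a Borel--Cantelli / shrinking-target computation: the statistics of cusp excursions of a $\mu$-typical geodesic ray are well understood (via Khinchin--Sullivan logarithm laws and their refinements, or directly via the exponential decay of $\mu$ of shrinking horoball shadows), and the probability that the ray makes an excursion into a rank-$k$ cusp of depth at least $\ell$ around time $t$ decays like $e^{-(2\delta-k)\ell}$ up to multiplicative constants. Summing over a geometric sequence of scales $t_n$ and requiring depth $\ell_n$ large enough that $\Psi(t_n)$ is matched, the convergence or divergence of $\sum_n e^{-(2\delta-k)\ell_n}$ is exactly the convergence or divergence of the series in \eqref{Hausdorffseries} and \eqref{packingseries} after the change of variables dictated by \eqref{Psidef}; the monotonicity and differentiability hypothesis \eqref{Psiassumption} is what lets one pass between the sum over a sparse geometric sequence of times and the sum over all integer times $t$, and guarantees that the relevant $\limsup$ is genuinely realized infinitely often rather than only along a subsequence. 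The independence (or quasi-independence) needed to run the converse Borel--Cantelli direction comes from the mixing of the geodesic flow together with the product structure of $\mu$ under the boundary-at-infinity identification.

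The main obstacle I anticipate is the quasi-independence step for the divergence half: one needs that deep cusp excursions at well-separated times behave like nearly independent events under $\mu$, uniformly in the depth, so that the second Borel--Cantelli lemma applies and yields $\HH^\psi(\mu) = 0$ (resp.\ $\PP^\psi(\mu) = \infty$) rather than merely a non-finiteness statement on part of $\Lambda$. Establishing this requires quantitative mixing estimates for the geodesic flow adapted to the cusp geometry (effective decay of correlations for horoball-indicator observables), together with care that the constants in the Stratmann--Velani formula and in the cusp-excursion probabilities do not accumulate; a secondary technical point is handling the transition regions where a geodesic enters or leaves a horoball, where $\rho(x,t)$ is not literally the distance into the cusp but is comparable to it. I expect the Rogers--Taylor reduction and the translation of the series to be essentially bookkeeping once the probabilistic estimate is in hand.
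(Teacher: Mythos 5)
Your skeleton coincides with the paper's -- a Rogers--Taylor--Tricot density reduction, the Stratmann--Velani global measure formula, and a zero--one law for deep cusp excursions -- but the step you yourself flag as the main obstacle is exactly where the proposal stops being a proof. The paper never proves quasi-independence via mixing of the geodesic flow: it invokes Stratmann--Velani's Khinchin-type theorem (Theorem \ref{theoremkhinchin}), which already delivers the full zero--one law $\mu(\Omega_p(\phi))\in\{0,1\}$ according to the convergence or divergence of $\sum_n\phi(\lambda^n)^{2\delta-k_p}$, with the correct exponent $2\delta-k_p$ that you correctly identified heuristically. The remaining work in the paper (Lemma \ref{lemma1} and the change-of-variables computation) is precisely the translation of $\overline D_\mu^\psi$ and $\underline D_\mu^\psi$ into membership in a family of shrinking-target sets $\Omega_p(\phi_{p,\alpha})$ and the identification of the resulting series with \eqref{Hausdorffseries} and \eqref{packingseries}. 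Your proposed substitute -- effective decay of correlations for horoball-indicator observables plus the second Borel--Cantelli lemma -- is a much heavier tool whose availability for all nonelementary geometrically finite Kleinian groups (all $d$, all admissible $\delta$) you would have to establish, and even granting it you would still need uniformity in the depth parameter and a passage from flow statements to $\mu$-a.e.\ boundary statements. Since the needed zero--one law exists off the shelf, the gap is not fatal, but as written the central probabilistic input is assumed rather than proven.

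Two further points. First, your density criterion in the direction giving $\HH^\psi(\mu)=\infty$ is wrong as stated: $\overline D_\mu^\psi$ merely \emph{bounded} on a set of positive measure yields only $\HH^\psi(\mu)>0$; to conclude $\HH^\psi(\mu)=\infty$ one needs $\overline D_\mu^\psi=0$ $\mu$-almost everywhere (which is what the convergence case actually delivers), and dually for packing measure via $\underline D_\mu^\psi$ (cf.\ Corollary \ref{corollaryRTT}). Second, the "bookkeeping" is where the hypotheses \eqref{Psiassumption} genuinely enter and is not quite routine: one needs the excursion estimate $b(\eta_t)\asymp_\plus\min\left(t-\log\frac{1}{r_\xi},\,2\log\frac{1}{\|\xi-\eta\|}-\log\frac{1}{r_\xi}-t\right)$ (Lemma \ref{lemmadetat}), the reduction to $\lim_{t\to\infty}\Psi'(t)=0$ so that $t\mapsto b(\eta_t)(k_\xi-\delta)-\Psi(t)$ is extremized at the deepest point of each excursion, a countable family of parameters $\alpha$ to convert the $\limsup$/$\liminf$ of the density into membership in the sets $\Omega_p(\phi_{p,\alpha})$, and the reduction to $\Psi\to+\infty$ in case (i) and $\Psi\to-\infty$ in case (ii) by comparison with $\HH^\delta$ and $\PP^\delta$. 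Also note the sign in the global measure formula: it is $e^{-\delta t}e^{b(\eta_t)(k(\eta_t)-\delta)}$, not $e^{-\delta t}e^{-(\kappa-\delta)\rho}$ as you wrote, though your hedge "suitably signed" suggests this is a slip rather than a conceptual error.
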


\begin{example}
For the value of $\psi$ given in \eqref{conjecturedphi}, we have
\[
\Psi(t) = \frac{\kmax - \delta}{2\delta - \kmax}\big(\log(t) + \log\log\log(t)\big),
\]
so the series \eqref{Hausdorffseries} reduces to
\[
\sum_{t = 1}^\infty \frac{1}{t\log\log(t)},
\]
which diverges. Therefore $\HH^\psi(\mu) = 0$ for this $\psi$.
\end{example}

\begin{remark}
The relations in Theorem \ref{theorem1} can be a little disorienting at first -- divergence of the series \eqref{Hausdorffseries} guarantees $\HH^\psi(\mu) = 0$, while divergence of the series \eqref{packingseries} guarantees $\PP^\psi(\mu) = \infty$. The unifying insight is that higher values of $\Psi$ correspond to higher (decreasing more slowly as $r\searrow 0$) values of $\psi$, which in turn correspond to higher values of $\HH^\psi(\mu)$ and $\PP^\psi(\mu)$. Since the series \eqref{Hausdorffseries} is decreasing with respect to $\Psi$ while \eqref{packingseries} is increasing with respect to $\Psi$, divergence of \eqref{Hausdorffseries} indicates low (not close to $+\infty$) values of $\Psi$ while divergence of \eqref{packingseries} indicates high (not close to $-\infty$) values of $\Psi$.
\end{remark}

\begin{remark}
Theorem \ref{theorem1} can be generalized to the setting of pinched Hadamard manifolds,\footnote{In this setting, the cusp rank $k_\xi$ of a parabolic point $\xi$ is interpreted to be the number $2\delta(G_\xi)$, where $G_\xi$ is the stabilizer of $\xi$ in $G$ and $\delta(G_\xi)$ is the Poincar\'e exponent of $G_\xi$; cf. \cite[Th\'eor\`eme 3.2]{Schapira}.} under some additional assumptions regarding the group $G$. Indeed, the three main results used in the proof of Theorem \ref{theorem1}, namely the Rogers--Taylor--Tricot density theorem, the Global Measure Formula, and Stratmann--Velani's Khinchin-type Theorem for Geometrically Finite Groups, are generalized to this setting in \cite[\68]{MSU},\footnote{The ``arbitrary metric space'' of \cite[\68]{MSU} should be interpreted to be the Gromov boundary of the pinched Hadamard manifold in question, endowed with a visual metric.} \cite[Th\'eor\`eme 7.2]{Schapira}, and \cite[Theorem 3]{HP_Khinchin}, respectively, and the deduction of Theorem \ref{theorem1} from these theorems does not make use of constant curvature in any essential way. However, for simplicity of exposition we stick to the case of standard hyperbolic space.
\end{remark}


\begin{corollary}
\label{corollary1}
Let $G$ and $\psi$ be as in Theorem \ref{theorem1}.
\begin{itemize}
\item[(i)] If $\delta < \kmax$, then $\HH^\psi(\Lambda) = 0$ or $\infty$.
\item[(ii)] If $\delta > \kmin$, then $\PP^\psi(\Lambda) = 0$ or $\infty$.
\end{itemize}
\end{corollary}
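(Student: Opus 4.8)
The plan is to argue by contradiction, establishing (i); part (ii) is entirely parallel, using that packing measures obey the same conformal transformation law and invoking Theorem~\ref{theorem1}(ii) (which requires $\delta>\kmin$) in place of (i). So assume $\delta<\kmax$ and suppose, toward a contradiction, that $0<\HH^\psi(\Lambda)<\infty$. The idea is that under this supposition the finite Borel measure $\nu:=\HH^\psi\given\Lambda$ must be proportional to a conformal measure of $G$, hence proportional to $\mu$ itself, which would force $\HH^\psi(\mu)$ to be positive and finite.

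First I would extract the one consequence of \eqref{Psiassumption} that is needed: the limit $\alpha:=\lim_{t\to\infty}\Psi'(t)$ exists, and then \eqref{Psidef} gives $\psi(\lambda r)/\psi(r)\to\lambda^{\delta-\alpha}$ as $r\to 0$, uniformly for $\lambda$ in compact subsets of $(0,\infty)$. In other words, $\psi$ is regularly varying at $0$ of index $s:=\delta-\alpha$. Since each $g\in G$ acts on the boundary sphere as a conformal diffeomorphism with continuous, strictly positive derivative, one can then run the standard change-of-variables argument for Hausdorff measure under a conformal map --- in which the relevant local scaling exponent is precisely the index of regular variation --- to obtain
\[
\nu(gA)=\int_A |g'(\xi)|^{s}\,d\nu(\xi)\qquad(g\in G,\ A\subseteq\Lambda\ \text{Borel}),
\]
i.e.\ $\nu$ is an $s$-conformal measure for $G$.

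Next I would invoke the classical fact that a nonelementary geometrically finite group carries a finite nonzero conformal measure on its limit set only in the dimension equal to its Poincar\'e exponent~$\delta$: for $s<\delta$ there is no conformal density of dimension below $\delta$, while for $s>\delta$ the Poincar\'e series converges, so the shadow lemma and the Borel--Cantelli lemma force the measure to vanish on the radial limit set, which is impossible since its complement in $\Lambda$ is countable (consisting of bounded parabolic points) and $\nu$ is nonatomic. Hence $s=\delta$, i.e.\ $\alpha=0$, and $\nu/\nu(\Lambda)$ is a $\delta$-conformal probability measure. By the uniqueness of the Patterson--Sullivan measure, $\nu=\nu(\Lambda)\,\mu$, so $\mu\propto\HH^\psi\given\Lambda$ with $0<\HH^\psi(\Lambda)<\infty$; by the Rogers--Taylor--Tricot density theorem this makes $\HH^\psi(\mu)$ comparable to $\HH^\psi(\Lambda)$, hence positive and finite --- contradicting Theorem~\ref{theorem1}(i), which applies because $\delta<\kmax$.

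The step I expect to require genuine care is the conformal transformation law for $\HH^\psi$ (and, in part (ii), for $\PP^\psi$): one must verify that a \emph{finite} measure $\HH^\psi\given\Lambda$ transforms under the M\"obius maps of $G$ exactly like an $s$-conformal measure with $s=\delta-\lim_{t\to\infty}\Psi'(t)$, and this is the only place where hypothesis \eqref{Psiassumption} is used in an essential way --- without control on $\Psi'$ the ratios $\psi(\lambda r)/\psi(r)$ need not converge and $\nu$ need not be conformal. The remaining ingredients --- the classification of dimensions admitting a conformal measure, the shadow lemma, and the uniqueness of the Patterson--Sullivan measure --- are all standard for geometrically finite groups.
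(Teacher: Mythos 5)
Your proposal is correct, and its skeleton coincides with the paper's: assume $0<\HH^\psi(\Lambda)<\infty$, show $\nu=\HH^\psi\given\Lambda$ is a conformal measure (this is the paper's Lemma \ref{lemmaHPconformal}, whose change-of-variables argument you reproduce), invoke uniqueness of the $\delta$-conformal measure to get $\nu\propto\mu$, and contradict Theorem \ref{theorem1}. The one place you genuinely diverge is the treatment of $\alpha:=\lim_{t\to\infty}\Psi'(t)\neq 0$. The paper disposes of this case up front by an elementary comparison: since $\psi(r)=r^{\delta-\alpha+o(1)}$, one compares $\HH^\psi$ and $\PP^\psi$ with $\HH^s$ and $\PP^s$ for $s$ strictly between $\delta$ and $\delta-\alpha$ and uses that the Hausdorff and packing dimensions of $\Lambda$ both equal $\delta$, so the corollary is immediate there and the conformality lemma is only needed with exponent $\delta$. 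You instead keep $\alpha$ general, prove that $\nu$ is $s$-conformal with $s=\delta-\alpha$, and then force $s=\delta$ by the classification of admissible conformal dimensions: the shadow lemma rules out $s<\delta$, and for $s>\delta$ convergence of the Poincar\'e series plus Borel--Cantelli kills the radial limit set, which contradicts nonatomicity of $\nu$ (note $\psi(0^+)=0$) together with countability of the non-radial part of $\Lambda$ for a geometrically finite group. Both routes are valid; the paper's reduction is lighter (it only quotes the known dimension of $\Lambda$), while yours is self-contained within the conformal-measure framework at the cost of importing the shadow-lemma machinery, and it yields the slightly stronger intermediate fact that a finite nonzero $\HH^\psi\given\Lambda$ forces the regular-variation index of $\psi$ to equal $\delta$. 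One small simplification: the appeal to Rogers--Taylor--Tricot at the very end is unnecessary, since once $\mu\propto\HH^\psi\given\Lambda$ the equality $\HH^\psi(\mu)=\HH^\psi(\Lambda)$ follows directly from the definition of $\HH^\psi(\mu)$ (any set of full $\mu$-measure differs from $\Lambda$ by an $\HH^\psi$-null set), which is how the paper concludes.
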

\begin{proof}
Without loss of generality we may assume that
\begin{equation}
\label{Psiassumption2}
\lim_{t\to\infty} \Psi'(t) = 0.
\end{equation}
Indeed, since  the Hausdorff and packing dimensions of $\Lambda$ are both equal to $\delta$ \cite{Sullivan_entropy,StratmannUrbanski4}, if \eqref{Psiassumption2} fails then the corollary follows from comparing the measures $\HH^\psi$ and $\PP^\psi$ with the measures $\HH^s$ and $\PP^s$, where $s$ lies between $\delta$ and $\delta + \lim_{t\to\infty} \Psi'(t)$.

Suppose $\delta < \kmax$ but $0 < \HH^\psi(\Lambda) < \infty$. Then \eqref{Psiassumption2} implies that the measure $\nu = \HH^\psi\given\Lambda$ is $\delta$-conformal with respect to $G$ (Lemma \ref{lemmaHPconformal}). Next, the uniqueness of $\delta$-conformal measures for geometrically finite groups (e.g. \cite[Theorem 1]{Sullivan_entropy}) implies that $\nu = \alpha\mu$ for some $\alpha > 0$. But then $\HH^\psi(\mu) = \HH^\psi(\nu) = \HH^\psi(\Lambda) \in (0,\infty)$, contradicting Theorem \ref{theorem1}. This demonstrates (i), and (ii) is proven similarly.
\end{proof}

Note however that this argument does not tell us that the value of $\HH^\psi(\Lambda)$ is determined by the convergence or divergence of the series \eqref{Hausdorffseries}; \emph{a priori} it may be the case that $\HH^\psi(\Lambda) = \infty$ while $\HH^\psi(\mu) = 0$. An interesting question is whether or not this is possible.

{\bf Convention.} The symbols $\lesssim$, $\gtrsim$, and $\asymp$ will denote coarse asymptotics; a subscript of $\plus$ indicates that the asymptotic is additive, and a subscript of $\times$ indicates that it is multiplicative. For example, $A\lesssim_\times B$ means that there exists a constant $C > 0$ (the \emph{implied constant}) such that $A\leq C B$. It is understood that the implied constant $C$ is only allowed to depend on certain ``universal'' parameters, to be understood from context.\\

{\bf Acknowledgements.} The author thanks Tushar Das for calling his attention to Ala-Mattila's thesis and to the fact that the question addressed in this paper is still considered open, as well as for helpful comments on an earlier version of this paper. The author also thanks the referee for valuable comments.

\section{Preliminaries}
Our theorem is essentially a combination of three known theorems: the Rogers--Taylor--Tricot density theorem, the Global Measure Formula, and Stratmann--Velani's Khinchin-type Theorem for Geometrically Finite Groups. In this section we recall these theorems and the definitions used in them.

\subsection{The Rogers--Taylor--Tricot density theorem}
\label{subsectionRTT}
We first recall the basics of generalized Hausdorff and packing measures. Let $\psi:\Rplus\to\Rplus$ be a \emph{Hausdorff gauge function}, i.e. a positive increasing continuous function satisfying $\psi(0) = 0$. The \emph{$\psi$-dimensional Hausdorff measure} of a set $A\subset\R^d$ is
\[
\HH^\psi(A) = \lim_{\epsilon\searrow 0}\inf\left\{\sum_{i = 1}^\infty \psi(\diam(U_i)): \text{$(U_i)_1^\infty$ is a countable cover of $A$ with $\diam(U_i)\leq\epsilon \all i$}\right\}
\]
and the  \emph{$\psi$-dimensional packing measure} of $A$ is defined by the formulas
\[
\w\PP^\psi(A) = \lim_{\epsilon\searrow 0}\sup\left\{\sum_{j = 1}^\infty \psi(\diam(B_j)):
\begin{split}
&\text{$(B_j)_1^\infty$ is a countable disjoint collection of balls}\\
&\text{with centers in $A$ and with $\diam(B_j)\leq\epsilon \all j$}
\end{split}\right\}
\]
\[
\PP^\psi(A) = \inf\left\{ \sum_{i = 1}^\infty \w\PP^\psi(A_i) : A \subset \bigcup_{i = 1}^\infty A_i\right\}.
\]
A special case is when $\psi(r) = r^s$ for some $s > 0$, in which case the shorthands $\HH^\psi = \HH^s$ and $\PP^\psi = \PP^s$ are used.

If $\mu$ is a Borel measure on $\R^d$, then we let
\begin{align*}
\HH^\psi(\mu) &= \inf\left\{\HH^\psi(A): \mu(\R^d\butnot A) = 0\right\},\\
\PP^\psi(\mu) &= \inf\left\{\PP^\psi(A): \mu(\R^d\butnot A) = 0\right\}.
\end{align*}
Moreover, for each point $x\in\R^d$ let
\begin{align*}
\overline D_\mu^\psi(x) &= \limsup_{r\searrow 0} \frac{\mu(B(x,r))}{\psi(r)},\\
\underline D_\mu^\psi(x) &= \liminf_{r\searrow 0} \frac{\mu(B(x,r))}{\psi(r)}\cdot
\end{align*}

\begin{theorem}[Rogers--Taylor--Tricot density theorem]
\label{theoremRTT}
Fix $d\in\N$, let $\mu$ be a Borel measure on $\R^d$, and let $\psi$ be a Hausdorff gauge function. Then for every Borel set $A\subset\R^d$,
\begin{align} \label{rogerstaylor}
\mu(A) \inf_{x\in A}\frac{1}{\overline D_\mu^\psi(x)} \lesssim_\times \HH^\psi(A) &\lesssim_\times \mu(\R^d) \sup_{x\in A}\frac{1}{\overline D_\mu^\psi(x)}\\ \label{taylortricot}
\mu(A) \inf_{x\in A}\frac{1}{\underline D_\mu^\psi(x)} \lesssim_\times \PP^\psi(A) &\lesssim_\times \mu(\R^d) \sup_{x\in A}\frac{1}{\underline D_\mu^\psi(x)}\cdot
\end{align}
\end{theorem}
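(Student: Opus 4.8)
The plan is to derive all four inequalities from two elementary ``density comparison principles,'' one in which the relevant density is bounded above on $A$ and one in which it is bounded below, exploiting that for Hausdorff measure the relevant density is the upper density $\overline D_\mu^\psi$ while for packing measure it is the lower density $\underline D_\mu^\psi$. The unifying mechanism is the following translation: a bound $\limsup_{r\searrow 0}\mu(B(x,r))/\psi(r)\le t$ (or $\liminf\ge t$) says that $\mu(B(x,r))\le(t+\epsilon)\psi(r)$ (resp.\ $\ge(t-\epsilon)\psi(r)$) for \emph{all} sufficiently small $r$, whereas $\limsup\ge t$ (or $\liminf\le t$) only says that the corresponding inequality holds \emph{along some sequence} $r_k\searrow 0$. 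The former, ``uniform,'' case I would handle by direct mass-distribution estimates with no covering theorem, and the latter, ``fine cover,'' case by invoking a covering theorem.

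For the two uniform cases --- the left-hand bound of \eqref{rogerstaylor} (where $\overline D_\mu^\psi\le t$) and the right-hand bound of \eqref{taylortricot} (where $\underline D_\mu^\psi\ge t$) --- I would decompose $A=\bigcup_n A_n$ into the increasing level sets $A_n$ on which the pointwise radius estimate holds for all $r<1/n$, so that the estimate becomes uniform on each $A_n$. For the Hausdorff lower bound, given any cover $(U_i)$ of $A_n$ by sets of diameter $<1/n$, I pick $x_i\in U_i\cap A_n$, note $U_i\subset B(x_i,\diam U_i)$, and sum $\mu(U_i\cap A_n)\le(t+\epsilon)\psi(\diam U_i)$ to get $\mu(A_n)\le(t+\epsilon)\HH^\psi(A)$; letting $n\to\infty$ and $\epsilon\to 0$ yields the bound with no loss. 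For the packing upper bound, given any disjoint packing of $A_n$ by balls of small diameter, disjointness gives $\sum_j\mu(B_j)\le\mu(\R^d)$ and hence $\sum_j\psi(\diam B_j)\lesssim_\times\mu(\R^d)/t$; taking the supremum bounds the premeasure $\w\PP^\psi(A_n)$, and then $\PP^\psi(A)=\lim_n\PP^\psi(A_n)\le\lim_n\w\PP^\psi(A_n)\lesssim_\times\mu(\R^d)/t$, using $\PP^\psi\le\w\PP^\psi$ and continuity of $\PP^\psi$ from below.

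For the two fine-cover cases --- the right-hand bound of \eqref{rogerstaylor} (where $\overline D_\mu^\psi\ge t$) and the left-hand bound of \eqref{taylortricot} (where $\underline D_\mu^\psi\le t$) --- the good balls $B(x,r)$ at which $\mu(B(x,r))\gtrsim t\psi(r)$ (Hausdorff case) or $\mu(B(x,r))\lesssim t\psi(r)$ (packing case) at arbitrarily small radii form a fine cover of $A$. For the Hausdorff upper bound I feed this fine cover into the Besicovitch covering theorem to extract $N_d$ countable disjoint subfamilies whose union still covers $A$; summing $\psi(\diam)$ over these balls and using disjointness within each family to bound $\sum\mu(B)\le N_d\,\mu(\R^d)$ gives $t\HH^\psi(A)\lesssim_\times\mu(\R^d)$. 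For the packing lower bound I instead use the Vitali covering theorem to extract a single disjoint subfamily $\{B_j\}$ with $\mu(E\setminus\bigcup_j B_j)=0$ for a given $E\subset A$; this is an admissible packing, so $\w\PP^\psi(E)\ge\sum_j\psi(\diam B_j)\ge\sum_j\mu(B_j)/t\ge\mu(E)/t$, and then the two-step definition gives $\PP^\psi(A)=\inf\sum_i\w\PP^\psi(A_i)\ge\inf\sum_i\mu(A_i)/t\ge\mu(A)/t$.

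The main obstacle I anticipate is bookkeeping rather than any single hard idea, and it is concentrated in two places. First, the two upper bounds on $\HH^\psi$ and $\PP^\psi$ require passing from a ball of radius $r$ to its diameter $2r$, producing a factor $\psi(2r)/\psi(r)$: since the density is normalized by $\psi(r)$ but the measures are built from $\psi(\diam)$, this factor must be absorbed into the implied constant (harmless under a doubling hypothesis on $\psi$, and in any case under the standing convention that implied constants may depend on the ambient data), whereas the two lower bounds need only monotonicity of $\psi$. Second, for the packing statements one must carefully navigate the definition of $\PP^\psi$ through the premeasure $\w\PP^\psi$ and the infimum over countable partitions: the lower bound must hold for \emph{every} partition (which is precisely why the per-piece estimate $\w\PP^\psi(A_i)\ge\mu(A_i)/t$ is what is needed), while the upper bound exploits the trivial partition together with continuity of $\PP^\psi$ from below. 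The covering-theorem extractions and the measurability of the level sets $A_n$ I expect to be routine invocations of standard facts.
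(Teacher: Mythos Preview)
The paper does not give its own proof of Theorem~\ref{theoremRTT}; it merely cites the original sources (Rogers--Taylor for \eqref{rogerstaylor}, Taylor--Tricot for \eqref{taylortricot}) together with the exposition in Bishop--Peres and the metric-space generalization in \cite{MSU}. There is therefore nothing in the paper to compare your argument against.

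That said, your sketch is precisely the standard argument one finds in those references: mass-distribution for the two ``uniform'' inequalities, and a covering-theorem extraction (Besicovitch for the Hausdorff upper bound, a Vitali/Besicovitch-type differentiation theorem for the packing lower bound) for the two ``fine-cover'' inequalities, together with the passage through the packing premeasure $\w\PP^\psi$ and countable decompositions. One small point worth tightening: in the packing lower bound, the ``Vitali covering theorem'' conclusion $\mu(E\setminus\bigcup_j B_j)=0$ for a general Borel measure $\mu$ on $\R^d$ is not the classical Vitali theorem but its Besicovitch-based analogue, so you should invoke Besicovitch there as well. Your caveat about the factor $\psi(2r)/\psi(r)$ is genuine: the paper's densities are normalized by $\psi(r)$ while $\HH^\psi$ and $\PP^\psi$ use $\psi(\diam)$, so without a doubling hypothesis on $\psi$ the implied constants in the two ``fine-cover'' inequalities need not be finite; the cited sources typically either assume doubling or build the diameter into the density from the start.
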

\noindent Formula \eqref{rogerstaylor} was proven by Rogers and Taylor in 1959 \cite{RogersTaylor}, while formula \eqref{taylortricot} was proven by Taylor and Tricot in 1985 \cite[Theorem 2.1]{TaylorTricot}. A short proof of \eqref{rogerstaylor} can be found in \cite[Proposition 3.4 of Chapter 4]{BishopPeres}. A generalization to arbitrary metric spaces can be found in \cite[\68]{MSU}.

\begin{corollary}
\label{corollaryRTT}
With $d$, $\mu$, and $\psi$ as above,
\begin{align} \label{rogerstaylorv2}
\mu(\R^d) \essinf_{x\in\R^d} \frac{1}{\overline D_\mu^\psi(x)} \lesssim_\times \HH^\psi(\mu) &\lesssim_\times \mu(\R^d) \esssup_{x\in\R^d} \frac{1}{\overline D_\mu^\psi(x)}\\ \label{taylortricotv2}
\mu(\R^d) \essinf_{x\in\R^d} \frac{1}{\underline D_\mu^\psi(x)} \lesssim_\times \PP^\psi(\mu) &\lesssim_\times \mu(\R^d) \esssup_{x\in\R^d} \frac{1}{\underline D_\mu^\psi(x)}\cdot
\end{align}
\end{corollary}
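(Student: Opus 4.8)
The plan is to derive Corollary~\ref{corollaryRTT} directly from Theorem~\ref{theoremRTT}, by feeding the set-level estimates \eqref{rogerstaylor} and \eqref{taylortricot} appropriately chosen Borel sets of full $\mu$-measure and then passing to the infima in the definitions of $\HH^\psi(\mu)$ and $\PP^\psi(\mu)$. Since \eqref{rogerstaylorv2}--\eqref{taylortricotv2} are obtained from each other by the formal substitution $(\overline D_\mu^\psi,\HH^\psi)\leftrightarrow(\underline D_\mu^\psi,\PP^\psi)$, and since both $\HH^\psi$ and $\PP^\psi$ are monotone outer measures, it suffices to prove \eqref{rogerstaylorv2}; the proof of \eqref{taylortricotv2} is identical. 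Before starting I would note, in a sentence or two, that $x\mapsto\overline D_\mu^\psi(x)$ is Borel measurable -- so that $\essinf$ and $\esssup$ (always taken with respect to $\mu$) make sense and the sets used below are Borel. This is standard: for each fixed $r$ the map $x\mapsto\mu(B(x,r))$ is Borel (indeed semicontinuous), and the $\limsup$ defining $\overline D_\mu^\psi(x)$ may be restricted to rational radii, which exhibits $\overline D_\mu^\psi$ as a countable combination of Borel functions.

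For the lower bound in \eqref{rogerstaylorv2}, put $m=\essinf_{x\in\R^d}1/\overline D_\mu^\psi(x)$ and let $A\subseteq\R^d$ be \emph{any} set with $\mu(\R^d\butnot A)=0$. Choosing a Borel set $N\supseteq\R^d\butnot A$ with $\mu(N)=0$ and setting $A'=(\R^d\butnot N)\cap\{x:1/\overline D_\mu^\psi(x)\geq m\}$, one gets a Borel set with $A'\subseteq A$ and $\mu(A')=\mu(\R^d)$, the latter because $\{x:1/\overline D_\mu^\psi(x)\geq m\}$ has full $\mu$-measure by definition of the essential infimum. Monotonicity of $\HH^\psi$ and \eqref{rogerstaylor} applied to $A'$ then give
\[
\HH^\psi(A)\;\geq\;\HH^\psi(A')\;\gtrsim_\times\;\mu(A')\inf_{x\in A'}\frac{1}{\overline D_\mu^\psi(x)}\;\geq\;\mu(\R^d)\,m,
\]
with implied constant independent of $A$; taking the infimum over all such $A$ yields $\HH^\psi(\mu)\gtrsim_\times\mu(\R^d)\,m$.

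For the upper bound, put $M=\esssup_{x\in\R^d}1/\overline D_\mu^\psi(x)$, and assume $M<\infty$ (otherwise there is nothing to prove). The Borel set $A=\{x:1/\overline D_\mu^\psi(x)\leq M\}$ has full $\mu$-measure, so $\HH^\psi(\mu)\leq\HH^\psi(A)$, while \eqref{rogerstaylor} applied to $A$ gives $\HH^\psi(A)\lesssim_\times\mu(\R^d)\sup_{x\in A}1/\overline D_\mu^\psi(x)\leq\mu(\R^d)\,M$. This completes \eqref{rogerstaylorv2} (the degenerate case $\mu(\R^d)=0$ being trivial, and in the applications $\mu$ being a probability measure).

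I do not anticipate a genuine obstacle here; the only point calling for care is the interplay between the set-theoretic infimum defining $\HH^\psi(\mu)$ and the measure-theoretic essential infimum appearing in \eqref{rogerstaylorv2}. For the lower bound this forces one to first \emph{shrink} an arbitrary full-measure set to a Borel subset on which $1/\overline D_\mu^\psi\geq m$ holds pointwise before invoking \eqref{rogerstaylor}; for the upper bound one instead \emph{selects} the competitor set so that the pointwise supremum of $1/\overline D_\mu^\psi$ on it is at most $M$.
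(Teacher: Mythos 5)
Your proof is correct and follows essentially the same route as the paper: both arguments deduce the corollary from Theorem \ref{theoremRTT} by applying it to a full-$\mu$-measure Borel set on which $\overline D_\mu^\psi$ is pointwise bounded by its essential extrema (the paper uses a single such set $A_0$ and intersects it with an arbitrary full-measure competitor for the lower bound, just as you do with $A'$). Your extra remarks on Borel measurability of $\overline D_\mu^\psi$ and on shrinking an arbitrary full-measure set to a Borel subset are fine points the paper leaves implicit, but they do not change the argument.
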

\begin{proof}
Let
\[
A_0 = \left\{x\in\R^d : \essinf \overline D_\mu^\psi \leq \overline D_\mu^\psi(x) \leq \esssup \overline D_\mu^\psi\right\}.
\]
Since $\mu(\R^d\butnot A_0) = 0$, we have
\[
\HH^\psi(\mu) \leq \HH^\psi(A_0) \lesssim_\times \mu(\R^d) \esssup_{x\in\R^d} \frac{1}{\overline D_\mu^\psi(x)}\cdot
\]
On the other hand, given any set $B\subset\R^d$ such that $\mu(\R^d\butnot B) = 0$, applying \eqref{rogerstaylor} with $A = A_0\cap B$ gives
\[
\HH^\psi(B) \gtrsim_\times \mu(A_0\cap B) \essinf_{x\in\R^d}\frac{1}{\overline D_\mu^\psi(x)} = \mu(\R^d) \essinf_{x\in\R^d}\frac{1}{\overline D_\mu^\psi(x)},
\]
and taking the infimum over all such $B$s demonstrates \eqref{rogerstaylorv2}. The proof of \eqref{taylortricotv2} proceeds in the same manner.
\end{proof}


\subsection{Geometrically finite groups}
Fix $d\geq 2$, let $\B^d$ denote the $d$-dimensional Poincar\'e disk, and let $G$ be a nonelementary geometrically finite Kleinian group acting on $\B^d$ with at least one cusp. Recall that this means that there exists a disjoint $G$-invariant collection of horoballs $\scrH$ with the property that the quotient $\big(\CC_G\butnot\bigcup(\scrH)\big)/G$ is compact, where $\CC_G$ is the convex hull of the limit set of $G$ (e.g. \cite[Definition (GF1)]{Bowditch_geometrical_finiteness}). The elements of $\scrH$ are centered at the parabolic fixed points of $G$. For each parabolic point $\xi$, let $H_\xi$ denote the unique element of $\scrH$ centered at $\xi$, and let $k_\xi$ denote the rank of $\xi$.

Let $\delta$ denote the \emph{Poincar\'e exponent} of $G$, i.e.
\[
\delta = \inf\left\{s\geq 0 : \sum_{g\in G} e^{-s\dist(0,g(0))} < \infty\right\},
\]
where $\dist$ denotes the hyperbolic distance on $\B^d$. Let $\Lambda$ denote the \emph{limit set} of $G$, i.e. the set
\[
\Lambda = \left\{\xi\in\del\B^d : \xi = \lim_{n\to \infty} g_n(0) \text{ for some sequence $(g_n)_1^\infty$ in $G$}\right\}.
\]
Finally, let $\mu$ denote the \emph{Patterson--Sullivan measure} of $G$, i.e. the unique Borel probability measure on $\Lambda$ obeying the transformation rule
\[
\mu(g(A)) = \int_A |g'(\xi)|^\delta \dee\mu(\xi) \all A\subset\del\B^d \all g\in G.
\]

\subsubsection{The global measure formula}
Let the functions $k,b:\B^d\to\Rplus$ be defined as follows:
\begin{itemize}
\item If $x\in \B^d\butnot\bigcup(\scrH)$, then $k(x) = b(x) = 0$.
\item If $x\in H_\xi$ for some $H_\xi\in \scrH$, then $k(x) = k_\xi$ and $b(x) = \dist(x,\del H_\xi)$.
\end{itemize}
Given a point $\eta\in\Lambda$, let $\eta_t$ denote the unique point on the geodesic ray connecting $0$ and $\eta$ whose hyperbolic distance from $0$ is equal to $t$. 

\begin{theorem}[Global measure formula, {\cite[Theorem 2]{StratmannVelani}}]
\label{theoremglobalmeasure}
For any $\eta\in\Lambda$ and for any $t > 0$,
\[
\mu(B(\eta,e^{-t})) \asymp_\times e^{-\delta t} e^{b(\eta_t)(k(\eta_t) - \delta)}.
\]
\end{theorem}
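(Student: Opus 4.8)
The plan is to reduce the formula to an estimate on the $\mu$-measures of shadows, and then to treat the ``thick'' and cuspidal portions of the ray $t\mapsto\eta_t$ separately. First I would record the standard comparison from hyperbolic geometry: for a fixed radius $\rho_0$, the shadow from $0$ of the ball $B(\eta_t,\rho_0)$ is comparable to the boundary ball $B(\eta,e^{-t})$ up to a bounded distortion of the radius, so it suffices to estimate the $\mu$-mass of this shadow. Equivalently, viewing $\mu$ as a $\delta$-conformal density and shifting its basepoint from $0$ to $\eta_t$ along the ray contributes precisely the factor $e^{-\delta t}$ (the relevant Busemann cocycle along the ray is $\approx t$); the problem thus becomes to show that the mass of the shadow \emph{as seen from} $\eta_t$ is $\asymp_\times e^{b(\eta_t)(k(\eta_t)-\delta)}$.

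When $\eta_t$ lies in the thick part, so that $b(\eta_t)=k(\eta_t)=0$, this reduces to Sullivan's shadow lemma: geometric finiteness supplies a compact set meeting every thick-part orbit, and the conformality of $\mu$ together with a uniform lower bound for $\mu$ on a fixed ball yield a two-sided bound $\asymp_\times 1$, matching the asserted value $e^{0}$.

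The heart of the matter is the cuspidal case $b:=b(\eta_t)>0$, with $k:=k(\eta_t)=k_\xi$. Here I would pass to the upper half-space model with the parabolic point $\xi$ placed at a finite point, so that $B(\eta,e^{-t})$ is a genuinely small ball sitting near $\xi$, and expand its $\mu$-mass as a sum over the orbit $G_\xi(0)$, each summand being the mass of a shadow and hence $\asymp_\times e^{-\delta\dist(0,\gamma(0))}$ by the shadow lemma. Two elementary inputs drive the computation: the parabolic length estimate $\dist(0,\gamma(0))\approx 2\log\|\gamma\|$, where $\|\gamma\|$ is the translation length of $\gamma$ (conjugated to fix $\infty$), and the counting bound $\#\{\gamma\in G_\xi:\|\gamma\|\asymp_\times n\}\asymp_\times n^{k-1}$, reflecting that $G_\xi$ is virtually free abelian of rank $k$. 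An element with $\|\gamma\|\asymp n$ casts its shadow at distance $\asymp 1/n$ from $\xi$ with mass $\asymp_\times n^{-2\delta}$, so the mass gathered in a ball of radius $r$ about $\xi$ is the tail sum $\sum_{n\gtrsim 1/r} n^{k-1-2\delta}$. This is exactly where the inequality $2\delta>k_\xi$ is indispensable: being valid for geometrically finite groups (since $\delta>\delta(G_\xi)=k_\xi/2$), it makes the tail converge, with value $\asymp_\times r^{2\delta-k}$. Taking $r\asymp e^{-t}$, so that $b\approx t$, this matches $e^{-\delta t}e^{b(k-\delta)}$ in the extreme case $\eta=\xi$; for a general $\eta$ near $\xi$, the ball $B(\eta,e^{-t})$ is offset from $\xi$ by an amount governed by how much of the excursion remains (that is, by $b$ relative to $t$), and summing the contributing shadows interpolates between the thick-part value $e^{-\delta t}$ and the deep value, producing the stated correction.

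I expect the main obstacle to be uniformity and bookkeeping rather than any single estimate: one must obtain implied constants independent of the (finitely many $G$-orbits of) cusps, uniform in both the depth $b$ and the time $t$, and one must handle the two regimes of an excursion (entering versus leaving the horoball) as well as the horizontal offset of $B(\eta,e^{-t})$ from $\xi$. Both the upper and lower bounds require the two-sided shadow lemma, and controlling the directions \emph{not} moved by the rank-$k$ translation subgroup uses the hypothesis that $\xi$ is a bounded parabolic point (compactness of the cusp cross-section). Underpinning the whole argument is the convergence furnished by $2\delta>\kmax$, without which the relevant tail sums -- and indeed the finiteness of the local mass near the deepest cusps -- would fail.
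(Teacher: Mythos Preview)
The paper does not prove this statement at all: Theorem~\ref{theoremglobalmeasure} is quoted verbatim from \cite[Theorem~2]{StratmannVelani} as one of the three black-box inputs (alongside the Rogers--Taylor--Tricot density theorem and the Khinchin-type theorem) from which Theorem~\ref{theorem1} is deduced. There is therefore no ``paper's own proof'' to compare your proposal against.

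For what it is worth, your sketch is essentially the Stratmann--Velani argument: reduce to shadows, invoke Sullivan's shadow lemma on the thick part, and on the cuspidal part sum the shadow masses $\asymp_\times e^{-\delta\dist(0,\gamma(0))}$ over the rank-$k_\xi$ parabolic orbit, using $\dist(0,\gamma(0))\asymp_\plus 2\log\|\gamma\|$ and lattice-point counting to obtain the tail $\sum_{n\gtrsim 1/r} n^{k_\xi-1-2\delta}\asymp_\times r^{2\delta-k_\xi}$. Your identification of the key inequality $2\delta>k_\xi$ (equivalently $\delta>\delta(G_\xi)$, valid for nonelementary geometrically finite $G$) as the convergence mechanism is exactly right. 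Two small cautions: first, the line ``Taking $r\asymp e^{-t}$, so that $b\approx t$'' suppresses the additive constant $\log(1/r_\xi)$ coming from the horoball height, which is harmless for $\asymp_\times$ but should be tracked explicitly since there are finitely many cusp classes; second, the genuinely delicate part is the \emph{exiting} regime of an excursion, where $B(\eta,e^{-t})$ need not contain $\xi$ and one must count only those $\gamma\in G_\xi$ whose shadows land in an annular sector offset from $\xi$---you flag this as bookkeeping, but it is where most of the work in the original proof actually lies.
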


\subsubsection{Khinchin-type theorem for geometrically finite groups}
For each $\xi\in\del\B^d$ and $r > 0$, let $H(\xi,r)$ be the unique horoball centered at $\xi$ with Euclidean radius $r$, i.e.
\[
H(\xi,r) := B\big((1 - r)\xi,r\big).
\]
If $\xi$ is a parabolic point, then we let $r_\xi$ denote the unique value such that $H(\xi,r_\xi)\in \scrH$, so that $H_\xi = H(\xi,r_\xi)$.

Let $\phi:(0,\infty)\to(0,\infty)$ be a \emph{Khinchin function}, i.e. a positive monotonic doubling function. Here, by saying that $\phi$ is \emph{doubling}, we mean that there exist constants $C_1,C_2 > 1$ such that
\[
\frac{1}{C_1} \leq \frac{y}{x} \leq C_1 \;\;\Rightarrow\;\; \frac{1}{C_2} \leq \frac{\phi(y)}{\phi(x)} \leq C_2 \all x,y > 0.
\]
(For $\phi$ to be doubling, it is sufficient but not necessary that the function $\log\circ\phi\circ\exp$ is uniformly continuous.) Now fix a parabolic point $p$, and consider the set
\[
\Omega_p(\phi) := \{\eta\in\Lambda: \|\xi - \eta\| \leq \phi(r_\xi)r_\xi \text{ for infinitely many $\xi\in G(p)$}\}.
\]

\begin{theorem}[Khinchin-type theorem for geometrically finite Kleinian groups, {\cite[Theorem 4]{StratmannVelani}}]
\label{theoremkhinchin}
$\mu(\Omega_p(\phi)) = 0$ or $1$ according to whether the series
\[
\sum_{n = 1}^\infty \phi(\lambda^n)^{2\delta - k_p}
\]
converges or diverges, respectively. Here $\lambda\in (0,1)$ is a constant depending only on $G$.
\end{theorem}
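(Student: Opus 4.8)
The plan is to realize $\Omega_p(\phi)=\limsup_{\xi\in G(p)}E_\xi$ as a $\limsup$ set, where $E_\xi:=B(\xi,\phi(r_\xi)r_\xi)$, and to run a Borel--Cantelli argument whose two halves are governed by the convergence and divergence of the series in the statement. The first step is purely geometric: I would compute $\mu(E_\xi)$ from the Global Measure Formula (Theorem \ref{theoremglobalmeasure}). Writing $\phi(r_\xi)r_\xi=e^{-t}$, the point $\xi_t$ lies inside the horoball $H_\xi$ (of Euclidean radius $r_\xi$) at depth $b(\xi_t)\asymp_\plus\log(1/\phi(r_\xi))$, with $k(\xi_t)=k_p$, so Theorem \ref{theoremglobalmeasure} gives
\[
\mu(E_\xi)\asymp_\times(\phi(r_\xi)r_\xi)^\delta\,\phi(r_\xi)^{-(k_p-\delta)}=\phi(r_\xi)^{2\delta-k_p}r_\xi^\delta
\]
in the relevant regime $\phi\lesssim 1$. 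Organizing the orbit by scale and using the horoball-counting estimate $\#\{\xi\in G(p):r_\xi\asymp_\times\lambda^n\}\asymp_\times\lambda^{-n\delta}$ (which reflects that the coset Poincar\'e series $\sum_{\xi\in G(p)}r_\xi^s$ has critical exponent $\delta$), and replacing $\phi(r_\xi)$ by $\phi(\lambda^n)$ within each scale via the doubling hypothesis, collapses the total mass to $\sum_{\xi\in G(p)}\mu(E_\xi)\asymp_\times\sum_n\phi(\lambda^n)^{2\delta-k_p}$. The convergence half is then immediate: if this series converges then $\sum_\xi\mu(E_\xi)<\infty$, and the easy Borel--Cantelli lemma gives $\mu(\Omega_p(\phi))=0$.

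The divergence half is the substance of the theorem, and I would argue in two stages. First, a zero--one law: $\Omega_p(\phi)$ behaves like a tail event for the $G$-action, since for $g\in G$ one has $g\,\Omega_p(\phi)=\Omega_p(\widetilde\phi)$ for some $\widetilde\phi\asymp_\times\phi$ (using conformality $\|g\xi-g\eta\|\asymp_\times|g'(\eta)|\,\|\xi-\eta\|$ and $r_{g\xi}\asymp_\times|g'(\eta)|\,r_\xi$ with bounded distortion as $\xi\to\eta$, together with the doubling of $\phi$). As the convergence/divergence of the governing series is insensitive to multiplicative perturbation of $\phi$, the value $\mu(\Omega_p(\phi))$ is $G$-invariant modulo null sets, so ergodicity of $\mu$ under $G$ (a standard property of Patterson--Sullivan measures of nonelementary geometrically finite groups) forces $\mu(\Omega_p(\phi))\in\{0,1\}$; it thus suffices to prove positivity. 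Second, positivity via a second-moment (Paley--Zygmund) estimate: with $\sum_\xi\mu(E_\xi)=\infty$ already established, I would prove the quasi-independence bound $\sum_{\xi,\xi'}\mu(E_\xi\cap E_{\xi'})\lesssim_\times\big(\sum_\xi\mu(E_\xi)\big)^2$ along dyadic truncations of the orbit, whence the divergence Borel--Cantelli lemma yields $\mu(\limsup_\xi E_\xi)>0$.

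The main obstacle is precisely this quasi-independence estimate. Bounding $\mu(E_\xi\cap E_{\xi'})$ requires a case analysis on the joint geometry of the two horoball shadows: when $B(\xi,r_\xi)$ and $B(\xi',r_{\xi'})$ are essentially disjoint the intersection is empty, whereas when the smaller sits inside the larger one must show, using bounded distortion of $G$ and a second application of the Global Measure Formula on the overlap region, that the conditional mass of $E_{\xi'}$ given $E_\xi$ behaves like an independent copy --- that is, that near the cusp $\mu$ is self-similar enough that deeper excursions decorrelate from shallower ones. Controlling the nested pairs so that they do not overdominate the second moment, uniformly in the truncation, is the delicate point. Alternatively, one can bypass the global zero--one law by proving this estimate \emph{locally} (valid in every ball $B(\eta_0,\rho_0)$ centered on $\Lambda$ with implied constant $\gtrsim_\times\mu(B(\eta_0,\rho_0))$) and concluding $\mu(\Omega_p(\phi))=1$ directly from the Lebesgue density theorem; this avoids invoking ergodicity but shifts the same difficulty into establishing the variance estimate uniformly across all scales and locations.
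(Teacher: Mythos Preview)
The paper does not prove Theorem~\ref{theoremkhinchin} at all: it is quoted verbatim from Stratmann--Velani \cite[Theorem~4]{StratmannVelani} as one of the three black-box inputs (alongside the Rogers--Taylor--Tricot density theorem and the Global Measure Formula) from which Theorem~\ref{theorem1} is deduced. So there is no ``paper's own proof'' to compare your proposal against.

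That said, your outline is a faithful sketch of the Stratmann--Velani strategy (and of the later Hersonsky--Paulin generalization \cite{HP_Khinchin}): realize $\Omega_p(\phi)$ as a $\limsup$ of shrinking balls, compute $\mu(E_\xi)\asymp_\times\phi(r_\xi)^{2\delta-k_p}r_\xi^\delta$ via the Global Measure Formula, regroup by dyadic scale of $r_\xi$ using the horoball-counting asymptotic, and run Borel--Cantelli, with the divergence half handled by a second-moment/quasi-independence bound. You correctly locate the genuine work in the pairwise estimate $\mu(E_\xi\cap E_{\xi'})$ for nested shadows; in the original this is carried out via the ``friendliness'' properties of $\mu$ (bounded distortion, local doubling, and the measure decay on annuli near parabolic points), which is exactly the self-similarity you allude to. Your two alternatives for upgrading positivity to full measure---ergodicity of $\mu$ under $G$ versus a localized variance estimate plus Lebesgue density---are both viable; Stratmann--Velani effectively take the second route. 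Nothing in your sketch is wrong, but be aware that turning the quasi-independence heuristic into a rigorous bound is where essentially all the length of the actual proof resides.
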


\section{Proof of Theorem \ref{theorem1}}
As in Theorem \ref{theorem1}, let $G$ be a nonelementary geometrically finite Kleinian group with at least one cusp, let $\psi$ be a Hausdorff gauge function, let $\Psi$ be given by \eqref{Psidef}, and assume that \eqref{Psiassumption} holds. As in the proof of Corollary \ref{corollary1}, we may without loss of generality assume that
\begin{equation}
\label{Psiassumption3}
\lim_{t\to\infty} \Psi'(t) = 0.
\end{equation}
Let $P$ be a complete set of inequivalent parabolic points of $G$, and let
\[
P_{>\delta} = \{p\in P : k_p > \delta\}, \;\; P_{<\delta} = \{p\in P : k_p < \delta\}.
\]
\begin{lemma}
\label{lemma1}
For each $p\in P_{<\delta}\cup P_{>\delta}$ and $\alpha > 0$ let $\psi_p,\theta_p,\phi_{p,\alpha}:(0,\infty)\to(0,\infty)$ be defined by
\begin{align*}
\psi_p(r) &= \exp\left(-\frac{\Psi(\log(1/r))}{k_p - \delta}\right), \;\;
\theta_p(r) = \frac{r}{\psi_p(r)}, \;\;
\phi_{p,\alpha}(r) = \frac{\theta_p^{-1}(r/\alpha)}{r}\cdot
\end{align*}
\begin{itemize}
\item[(i)] If $\delta < \kmax$ and $\lim_{t\to\infty} \Psi(t) = +\infty$, then for $\mu$-a.e. $\eta\in\Lambda$,
\[
\overline D_\mu^\psi(\eta) \asymp_\times \max_{p\in P_{>\delta}} \Big(\sup\{\alpha > 0: \eta\in \Omega_p(\phi_{p,\alpha})\}\Big)^{k_p - \delta}.
\]
\item[(ii)] If $\delta > \kmin$ and $\lim_{t\to\infty} \Psi(t) = -\infty$, then for $\mu$-a.e. $\eta\in\Lambda$,
\[
\underline D_\mu^\psi(\eta) \asymp_\times \min_{p\in P_{<\delta}} \Big(\sup\{\alpha > 0: \eta\in \Omega_p(\phi_{p,\alpha})\}\Big)^{k_p - \delta}.
\]
\end{itemize}
\end{lemma}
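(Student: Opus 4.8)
The plan is to connect the local density exponents $\overline D_\mu^\psi(\eta)$ and $\underline D_\mu^\psi(\eta)$ to the geometry of the cusp excursions of the geodesic ray $[0,\eta)$, using the Global Measure Formula (Theorem~\ref{theoremglobalmeasure}), and then to recognize the resulting condition as a shrinking-target statement of the form appearing in $\Omega_p(\phi_{p,\alpha})$. First I would fix $\eta \in \Lambda$ and write, using \eqref{Psidef} and Theorem~\ref{theoremglobalmeasure},
\[
\frac{\mu(B(\eta,e^{-t}))}{\psi(e^{-t})} \asymp_\times \exp\big(b(\eta_t)(k(\eta_t)-\delta) - \Psi(t)\big),
\]
so that $\overline D_\mu^\psi(\eta)$ is governed by $\limsup_{t\to\infty}\big(b(\eta_t)(k(\eta_t)-\delta) - \Psi(t)\big)$ and $\underline D_\mu^\psi(\eta)$ by the corresponding $\liminf$. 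The point $\eta_t$ lies outside all horoballs for $t$ outside a countable union of ``excursion intervals'', one for each horoball $H_\xi$ with $\xi \in G(p)$, $p \in P$, that the ray penetrates; on such an interval, $k(\eta_t) = k_p$ and $b(\eta_t)$ rises linearly to a maximum (the penetration depth) and falls back linearly. When $k_p > \delta$ this contributes a positive spike to $b(\eta_t)(k(\eta_t)-\delta)$, relevant for (i); when $k_p < \delta$ it contributes a negative dip, relevant for (ii); and $p$ with $k_p = \delta$ contributes nothing, which is why only $P_{>\delta}$ and $P_{<\delta}$ appear. Since by \eqref{Psiassumption3} we have $\Psi'(t)\to 0$, the term $-\Psi(t)$ varies slowly compared to the linear growth of $b(\eta_t)$ on an excursion, so over a single deep excursion into $H_\xi$ the quantity $b(\eta_t)(k_p-\delta)-\Psi(t)$ is maximized essentially at the deepest point $\eta_{t_\xi}$, with value $\asymp_\plus D_\xi(k_p-\delta) - \Psi(t_\xi)$, where $D_\xi$ is the penetration depth and $t_\xi$ is (up to an additive constant) the time of deepest penetration; moreover $t_\xi \asymp_\plus \dist(0,\xi)$ and the relevant radius $e^{-t_\xi}$ and the depth $D_\xi$ are related to the Euclidean size $r_\xi$ of $H_\xi$ and the distance $\|\xi-\eta\|$ by standard horoball geometry.

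The second step is the bookkeeping that converts ``$b(\eta_{t_\xi})(k_p-\delta)-\Psi(t_\xi) \geq \log\alpha^{k_p-\delta}$ for infinitely many $\xi\in G(p)$'' into ``$\eta\in\Omega_p(\phi_{p,\alpha})$'', and this is precisely what the definitions of $\psi_p$, $\theta_p$, and $\phi_{p,\alpha}$ are engineered to do. The horoball geometry gives, for the excursion into $H_\xi$, an exponential comparison $\|\xi-\eta\| \asymp_\times r_\xi \cdot e^{-D_\xi}$ (a geodesic entering a horoball of Euclidean radius $r_\xi$ at Euclidean distance $\|\xi-\eta\|$ from the center penetrates to depth $\asymp_\plus \log(r_\xi/\|\xi-\eta\|)$), and $e^{-t_\xi} \asymp_\times \sqrt{r_\xi\|\xi-\eta\|}$ or the like; solving the inequality $D_\xi(k_p-\delta) \geq \Psi(t_\xi) + \log\alpha^{k_p-\delta}$ for $\|\xi-\eta\|$ in terms of $r_\xi$ produces exactly a bound of the shape $\|\xi-\eta\| \leq \phi_{p,\alpha}(r_\xi) r_\xi$. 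Concretely, $\psi_p(r) = \exp(-\Psi(\log(1/r))/(k_p-\delta))$ is the multiplicative factor by which the $\Psi$-term distorts the naive radius, $\theta_p(r) = r/\psi_p(r)$ packages the combined scaling, and $\phi_{p,\alpha}(r) = \theta_p^{-1}(r/\alpha)/r$ is the resulting threshold function; one checks $\phi_{p,\alpha}$ is monotonic and doubling (using \eqref{Psiassumption3}, which forces $\Psi(t)$ to grow subexponentially, hence $\psi_p$ to be ``slowly varying'' and $\theta_p$ to be a doubling homeomorphism of $(0,\infty)$) so that Theorem~\ref{theoremkhinchin} applies. I would then take a supremum/infimum over $\alpha$ and a max/min over $p \in P_{>\delta}$ resp.\ $p\in P_{<\delta}$: for (i), $\overline D_\mu^\psi(\eta)$ is large exactly when some cusp of rank $>\delta$ is visited with large $\alpha$, giving the $\max_p(\sup\{\alpha: \eta\in\Omega_p(\phi_{p,\alpha})\})^{k_p-\delta}$; for (ii), $\underline D_\mu^\psi(\eta)$ is small exactly when some cusp of rank $<\delta$ is visited with large $\alpha$ (a deep dip), giving the $\min_p$ with exponent $k_p-\delta < 0$ (so the base being large makes the power small, as the statement requires).

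A few ``$\mu$-a.e.'' reductions are needed and are where I would be careful. Rays that stay in the thick part for all large $t$, or that visit only finitely many horoballs, contribute $\overline D_\mu^\psi(\eta) \asymp_\times \limsup(-\Psi(t))$ resp.\ $\underline D_\mu^\psi(\eta)\asymp_\times\liminf(-\Psi(t))$; but under the hypotheses $\Psi(t)\to+\infty$ in (i) and $\Psi(t)\to-\infty$ in (ii) this degenerate case gives $\overline D^\psi_\mu = 0$ resp.\ $\underline D^\psi_\mu = +\infty$, consistent with the asserted formulas (the empty excursion history gives $\sup\{\alpha:\ldots\}=0$, which when raised to a negative power in (ii) gives $+\infty$) — but I must confirm this degenerate set is $\mu$-null, which follows from ergodicity/conservativity of the geodesic flow for geometrically finite groups (the Patterson--Sullivan measure is supported on rays returning to every cusp infinitely often, since $\mu(\Omega_p(\text{const})) = 1$ by Theorem~\ref{theoremkhinchin} with $\phi$ bounded below). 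Likewise I need that for $\mu$-a.e.\ $\eta$ the $\limsup$/$\liminf$ defining the density is attained along deep excursions rather than at shallow or ``in-between'' times; this is again handled by \eqref{Psiassumption3} (slow variation of $\Psi$) plus the fact that the running maximum of penetration depths $\to\infty$ a.e. \textbf{The main obstacle} I anticipate is the uniform horoball-geometry estimate relating $(\|\xi-\eta\|, r_\xi)$ to $(D_\xi, t_\xi)$ with additive error independent of $\xi$ and $\eta$, together with checking that after composing with the slowly-varying $\psi_p$ the error stays multiplicatively bounded — i.e., making the phrase ``essentially maximized at the deepest point'' into a clean $\asymp_\times$ statement valid simultaneously for all excursions; this is elementary hyperbolic trigonometry but must be done with care so that the implied constants in the final $\asymp_\times$ do not secretly depend on the excursion.
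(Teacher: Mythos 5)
Your outline follows the paper's argument essentially step for step: the Global Measure Formula reduces $\overline D_\mu^\psi$, $\underline D_\mu^\psi$ to $\limsup/\liminf$ of $b(\eta_t)(k(\eta_t)-\delta)-\Psi(t)$; the hypotheses $\Psi\to+\infty$ (resp. $-\infty$) kill the non-excursion and wrong-sign times; the extremum over each excursion is localized at the deepest point using $\Psi'\to 0$; and the resulting condition is translated into $\eta\in\Omega_p(\phi_{p,\alpha})$, followed by the sup over $\alpha$ and the max/min over $p$. The a.e.\ reductions you mention (a.e.\ ray enters horoballs of each relevant rank infinitely often, via Theorem \ref{theoremkhinchin} with constant $\phi$) are also exactly what the paper uses.

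The one place your sketch goes wrong is precisely the quantitative excursion estimate you flag as the main obstacle. The relation $\|\xi-\eta\|\asymp_\times r_\xi e^{-D_\xi}$ is correct, but ``$e^{-t_\xi}\asymp_\times\sqrt{r_\xi\|\xi-\eta\|}$ or the like'' is not (nor is ``$t_\xi\asymp_\plus\dist(0,\xi)$'' meaningful, since $\xi$ is a boundary point): the entry and exit times of the excursion are $\asymp_\plus\log\frac{1}{r_\xi}$ and $\asymp_\plus 2\log\frac{1}{\|\xi-\eta\|}-\log\frac{1}{r_\xi}$, so the time of deepest penetration is $t_\xi\asymp_\plus\log\frac{1}{\|\xi-\eta\|}$, i.e.\ $e^{-t_\xi}\asymp_\times\|\xi-\eta\|$. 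This is not a harmless detail, because $\Psi(t_\xi)$ enters the excursion value $D_\xi(k_p-\delta)-\Psi(t_\xi)$ and $\Psi$ is unbounded (only $\Psi'\to 0$), so evaluating $\Psi$ at your candidate time would perturb the formula by an amount of order $o(D_\xi)$, which is not bounded; only with $t_\xi=\log\frac{1}{\|\xi-\eta\|}$ does the threshold condition become literally $r_\xi/\theta_p(\|\xi-\eta\|)\geq\alpha$, i.e.\ $\|\xi-\eta\|\leq\phi_{p,\alpha}(r_\xi)r_\xi$, which is how the stated $\phi_{p,\alpha}$ arises. The paper supplies the uniform estimate you ask for as Lemma \ref{lemmadetat}: for $\eta_t\in H_\xi$ one has $b(\eta_t)\asymp_\plus f_\xi(t):=\min\left(t-\log\frac{1}{r_\xi},\,2\log\frac{1}{\|\xi-\eta\|}-\log\frac{1}{r_\xi}-t\right)$, proved via the bounded incircle of the ideal triangle $(0,\eta,\xi)$ and the comparison of the Gromov product with $\log\frac{1}{\|\xi-\eta\|}$, with additive constants independent of $\xi$ and $\eta$; and ``essentially maximized at the deepest point'' is made exact by noting that $h_\xi(t)=f_\xi(t)(k_\xi-\delta)-\Psi(t)$ satisfies $h_\xi'<0$ before $t_\xi$ and $h_\xi'>0$ after (in case (ii)), since $f_\xi'=\pm1$ while $\Psi'\to0$, so for all but finitely many $\xi$ the infimum over the excursion equals $h_\xi(t_\xi)$ on the nose. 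With that lemma in place your bookkeeping goes through exactly as in the paper.
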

\begin{remark}
From the assumptions \eqref{Psiassumption} and \eqref{Psiassumption3}, we can see that
\begin{itemize}
\item[(i)] $\psi_p$ is monotonic in a neighborhood of 0 (since $\Psi$ is eventually monotonic),
\item[(ii)] $\theta_p$ is increasing in a neighborhood of 0 (since $\lim_{t\to-\infty} [\log\circ\theta_p\circ\exp]'(t) = 1$),
\item[(iii)] $\phi_{p,\alpha}$ is monotonic in a neighborhood of 0 (since $\phi_{p,\alpha}(r) = \frac1\alpha \psi_p\big(\theta_p^{-1}(r/\alpha)\big)$), and
\item[(iv)] $\phi_{p,\alpha}$ is doubling
\[
\text{(since $\lim_{t\to-\infty} [\log\circ\phi_{p,\alpha}\circ\exp]'(t) = \frac{\lim_{t\to-\infty} [\log\circ\psi_p\circ\exp]'(t)}{\lim_{t\to-\infty} [\log\circ\theta_p\circ\exp]'(t)} = \frac 01 = 0$).}
\]
\end{itemize}
In particular, $\phi_{p,\alpha}$ is a Khinchin function.
\end{remark}
\begin{proof}[Proof of Lemma \ref{lemma1}]
We prove only (ii); the proof of (i) is similar but easier. 
By Theorem \ref{theoremglobalmeasure},
\[
\underline D_\mu^\psi(\eta) = \liminf_{t\to\infty} \frac{\mu(B(\eta,e^{-t}))}{\psi(e^{-t})}
\asymp_\times \liminf_{t\to\infty} \frac{e^{-\delta t} e^{b(\eta_t)(k(\eta_t) - \delta)}}{e^{-\delta t} e^{\Psi(t)}}
= \exp\liminf_{t\to\infty} [b(\eta_t)(k(\eta_t) - \delta) - \Psi(t)].
\]
Since by assumption $\lim_{t\to\infty} \Psi(t) = -\infty$, values of $t$ for which $b(\eta_t)(k(\eta_t) - \delta) \geq 0$ will not affect the lim-inf. On the other hand, since $\delta > \kmin$, for $\mu$-a.e. $\eta\in\Lambda$ we have $b(\eta_t)(k(\eta_t) - \delta) < 0$ for a sequence of $t$ tending to infinity. Thus
\begin{equation}
\label{estimate}
\log\underline D_\mu^\psi(\eta) \asymp_\plus \liminf_{\substack{t\to\infty \\ b(\eta_t)(k(\eta_t) - \delta) < 0}} [b(\eta_t)(k(\eta_t) - \delta) - \Psi(t)] = \liminf_{\xi\in G(P_{<\delta})} \inf_{\substack{t > 0 \\ \eta_t\in H_\xi}} [b(\eta_t)(k_\xi - \delta) - \Psi(t)].
\end{equation}
Fix $\xi\in G(P_{<\delta})$. To estimate the infimum on the right hand side, we use the following estimate for $b(\eta_t)$ which is valid for $\eta_t\in H_\xi$ (cf. Lemma \ref{lemmadetat} below):
\begin{equation}
\label{detat}
b(\eta_t) \asymp_\plus f_\xi(t) := \min\left(t - \log\frac{1}{r_\xi}, 2\log\frac{1}{\|\xi - \eta\|} - \log\frac{1}{r_\xi} - t\right).
\end{equation}
Let $t_\xi$ be the unique point at which $f_\xi$ is not differentiable, i.e. $t_\xi = \log\frac{1}{\|\xi - \eta\|}$. Then $f_\xi'(t) = 1$ for $t < t_\xi$ and $f_\xi'(t) = -1$ for $t > t_\xi$. Letting
\[
h_\xi(t) = f_\xi(t)(k_\xi - \delta) - \Psi(t),
\]
the assumption \eqref{Psiassumption3} guarantees that $h_\xi'(t) < 0$ for $t < t_\xi$ and $h_\xi'(t) > 0$ for $t > t_\xi$, assuming $t$ is sufficiently large. It follows that for all but finitely many $\xi\in G(P_{<\delta})$,
\[
\inf_{\substack{t > 0 \\ \eta_t\in H_\xi}} h_\xi(t)
= \inf_{t > 0} h_\xi(t)
= h_\xi(t_\xi) = \left[\left(t_\xi - \log\frac{1}{r_\xi}\right)(k_\xi - \delta) - \Psi(t_\xi)\right]
\]
and thus
\[
\log\underline D_\mu^\psi(\eta) \asymp_\plus \liminf_{\xi\in G(P_{<\delta})}\left[\left(\log\frac{1}{\|\xi - \eta\|} - \log\frac{1}{r_\xi}\right)(k_\xi - \delta) - \Psi\left(\log\frac{1}{\|\xi - \eta\|}\right)\right].
\]
Exponentiating gives
\[
\underline D_\mu^\psi(\eta)
\asymp_\times \min_{p\in P_{<\delta}} \liminf_{\xi\in G(p)} \left(\frac{1/\|\xi - \eta\|}{1/r_\xi} \psi_p(\|\xi - \eta\|) \right)^{k_p - \delta}
= \min_{p\in P_{<\delta}} \left(\limsup_{\xi\in G(p)} \frac{r_\xi}{\theta_p(\|\xi - \eta\|)} \right)^{k_p - \delta}.
\]
The calculation
\begin{align*}
\limsup_{\xi\in G(p)} \frac{r_\xi}{\theta_p(\|\xi - \eta\|)}
&= \sup\left\{\alpha > 0 : \frac{r_\xi}{\theta_p(\|\xi - \eta\|)} \geq \alpha \text{ for infinitely many $\xi\in G(p)$} \right\}\\
&= \sup\left\{\alpha > 0 : \|\xi - \eta\| \leq \theta_p^{-1}(r_\xi/\alpha) \text{ for infinitely many $\xi\in G(p)$} \right\}\\
&= \sup\left\{\alpha > 0 : \eta \in \Omega_p(\phi_{p,\alpha}) \right\}
\end{align*}
finishes the proof.
\end{proof}

Fix $p\in P_{<\delta}\cup P_{>\delta}$ and $\alpha > 0$, and let us determine whether the series
\[
\Sigma_{p,\alpha} = \sum_{n = 1}^\infty \phi_{p,\alpha}(\lambda^n)^{2\delta - k_p}
\]
occuring in Theorem \ref{theoremkhinchin} converges or diverges. For convenience write $\Delta_p = 2\delta - k_p$. We have\footnote{In the calculation below, an integration bound of $\clubsuit$ means that the precise integration bound is irrelevant.}
\begin{align*}
\Sigma_{p,\alpha} \asymp_\plus \int_\clubsuit^\infty \phi_{p,\alpha}(\lambda^t)^{\Delta_p} \;\dee t
&=_\pt \int_\clubsuit^\infty \frac{\theta_p^{-1}(\lambda^t/\alpha)^{\Delta_p}}{\lambda^{\Delta_p t}} \;\dee t\\
&\asymp_\times \int_\clubsuit^\infty \theta_p^{-1}(x^{-1/\Delta_p})^{\Delta_p} \;\dee x \note{letting $x = \alpha^{\Delta_p}\lambda^{-\Delta_p t}$}\\
&\asymp_\plus \int_0^\clubsuit \theta_p(x^{1/\Delta_p})^{-\Delta_p} \;\dee x \by{Lemma \ref{lemmainverse}}\\
&=_\pt \int_0^\clubsuit \psi_p(x^{1/\Delta_p})^{\Delta_p} \;\frac{\dee x}{x}\\
&\asymp_\times \int_\clubsuit^\infty \psi_p(e^{-t})^{\Delta_p} \;\dee t \note{letting $t = -\log(x^{1/\Delta_p})$}\\
&=_\pt \int_\clubsuit^\infty \exp\left(-\frac{2\delta - k_p}{k_p - \delta} \Psi(t)\right) \;\dee t\\
&\asymp_\plus \Sigma_p := \sum_{t = 1}^\infty \exp\left(-\frac{2\delta - k_p}{k_p - \delta} \Psi(t)\right).
\end{align*}
By Theorem \ref{theoremkhinchin}, for $\mu$-a.e. $\eta\in\Lambda$, for every $p\in P_{<\delta}\cup P_{>\delta}$, and for every rational $\alpha > 0$, we have $\eta\in \Omega_p(\phi_{p,\alpha})$ if and only if the series $\Sigma_{p,\alpha} \asymp_{\plus,\times} \Sigma_p$ diverges. Thus by Lemma \ref{lemma1}, for $\mu$-a.e. $\eta\in\Lambda$ we have
\begin{align} \label{aeeta1}
\begin{split}
\overline D_\mu^\psi(\eta) \asymp_\times \max_{p\in P_{>\delta}} \left(\begin{cases}
\infty & \Sigma_p = \infty\\
0 & \Sigma_p < \infty
\end{cases}\right)^{k_p - \delta}
&= \begin{cases}
\infty & \exists p\in P_{>\delta} \;\; \Sigma_p = \infty\\
0 & \forall p\in P_{>\delta} \;\;\Sigma_p < \infty
\end{cases}
= \begin{cases}
\infty & \text{\eqref{Hausdorffseries} diverges}\\
0 & \text{\eqref{Hausdorffseries} converges}
\end{cases}\\
\note{if $\delta < \kmax$ and $\Psi\to +\infty$}
\end{split}\\ \label{aeeta2}
\begin{split}
\underline D_\mu^\psi(\eta) \asymp_\times \min_{p\in P_{<\delta}} \left(\begin{cases}
\infty & \Sigma_p = \infty\\
0 & \Sigma_p < \infty
\end{cases}\right)^{k_p - \delta}
&= \begin{cases}
0 & \exists p\in P_{<\delta} \;\; \Sigma_p = \infty\\
\infty & \forall p\in P_{<\delta} \;\;\Sigma_p < \infty
\end{cases}
= \begin{cases}
0 & \text{\eqref{packingseries} diverges}\\
\infty & \text{\eqref{packingseries} converges}
\end{cases}\\
\note{if $\delta > \kmin$ and $\Psi\to -\infty$}
\end{split}
\end{align}
Combining with Corollary \ref{corollaryRTT}, we see that Theorem \ref{theorem1} holds if the hypothesis $\Psi\to +\infty$ is added to case (i) and the hypothesis $\Psi\to -\infty$ is added to case (ii). But these extra hypotheses can be added without loss of generality, since if they fail, Theorem \ref{theorem1} can be deduced by comparing the measures $\HH^\psi$ and $\PP^\psi$ to the measures $\HH^\delta$ and $\PP^\delta$, respectively.

\section{Proofs of auxiliary facts}
The following lemmas are probably well-known to experts, so we have separated them from the main part of the proof for ease of exposition.

\begin{lemma}
\label{lemmainverse}
Let $f:(0,\infty)\to(0,\infty)$ be a decreasing homeomorphism. Then
\[
\int_0^\infty f(x) \;\dee x = \int_0^\infty f^{-1}(x) \;\dee x.
\]
\end{lemma}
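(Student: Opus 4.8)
The plan is to interpret both integrals as the two-dimensional Lebesgue measure of a single planar region and to invoke Tonelli's theorem. Set
\[
R = \{(x,y) \in (0,\infty)\times(0,\infty) : y < f(x)\}.
\]
Since $f$ is continuous, $R$ is open, hence Lebesgue measurable, and Tonelli's theorem applied to the nonnegative measurable function $\mathbf{1}_R$ gives
\[
|R| = \int_0^\infty \big|\{y > 0 : y < f(x)\}\big| \;\dee x = \int_0^\infty f(x) \;\dee x,
\]
where $|\cdot|$ denotes Lebesgue measure; here we use that $f(x)\in(0,\infty)$ for every $x$, so the inner slice has measure exactly $f(x)$.

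The key step is to rewrite $R$ in terms of $f^{-1}$. Because $f:(0,\infty)\to(0,\infty)$ is a decreasing homeomorphism, so is $f^{-1}$, and for $x,y > 0$ one has the equivalences $y < f(x) \iff f^{-1}(y) > f^{-1}(f(x)) = x \iff x < f^{-1}(y)$, the first using that $f^{-1}$ is strictly decreasing and the second that $f$ is strictly decreasing. Hence
\[
R = \{(x,y) \in (0,\infty)\times(0,\infty) : x < f^{-1}(y)\},
\]
and applying Tonelli's theorem again, integrating first in $x$, yields $|R| = \int_0^\infty f^{-1}(y) \;\dee y$. Comparing the two expressions for $|R|$ finishes the proof.

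I expect no serious obstacle here: the only points needing (minor) care are that the common value may be $+\infty$, which is harmless since Tonelli's theorem requires no integrability hypothesis for nonnegative measurable integrands, and that $f$ is measurable, which is automatic (a monotone function is Borel, and here continuity already makes $R$ open). One could instead avoid Fubini by approximating $f$ from below by step functions and verifying the identity for step functions by direct computation, but the argument above is shorter and cleaner.
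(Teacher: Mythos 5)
Your proof is correct and follows essentially the same route as the paper: both arguments identify the two integrals with the area of the region under the graph of $f$, the paper by swapping coordinates via the measure-preserving map $(x,y)\mapsto(y,x)$, you by swapping the order of integration in Tonelli, which is the same idea. No gaps; your explicit treatment of measurability and the possibly infinite value is fine.
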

\begin{proof}
These integrals are respectively equal to the Lebesgue measures of the sets
\begin{align*}
S_f &= \{(x,y)\in (0,\infty)^2 : y < f(x)\}\\
S_{f^{-1}} &= \{(x,y)\in (0,\infty)^2 : y < f^{-1}(x)\},
\end{align*}
which are conjugate to each other via the measure-preserving isometry $(x,y)\mapsto (y,x)$.
\end{proof}

\begin{lemma}
\label{lemmadetat}
Let $H_\xi = H(\xi,r_\xi)$ be a horoball not containing the origin, and fix $\eta\in\del\B^d\butnot\{\xi\}$ and $t > 0$ such that $\eta_t\in H_\xi$. Then
\[
b(\eta_t) \asymp_\plus \min\left(t - \log\frac{1}{r_\xi}, 2\log\frac{1}{\|\xi - \eta\|} - \log\frac{1}{r_\xi} - t\right).
\]
\end{lemma}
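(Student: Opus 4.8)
The plan is to compute the distance from $\eta_t$ to the horosphere $\del H_\xi$ by moving to a convenient model. First I would apply a Möbius transformation $g\in\Isom(\B^d)$ sending $\xi$ to $\infty$ in the upper half-space model $\H^d$, so that the horoball $H_\xi$ becomes a ``half-space at infinity'' $\{z : z_d > s\}$ for some height $s > 0$; the relevant quantities to keep track of are $s$ and the image $g(\eta)\in\R^{d-1} = \del\H^d\butnot\{\infty\}$, together with the position of the basepoint $g(0)$. In this model, the geodesic ray from $g(0)$ to $g(\eta)$ is (asymptotically, after a bounded initial segment) a vertical Euclidean half-circle with one endpoint at $g(\eta)$, and the hyperbolic distance of a point on it from $\del H_\xi$ is $\log^+(z_d/s)$, i.e. $\log(z_d/s)$ when the point is above height $s$. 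So $b(\eta_t)$ equals, up to an additive constant absorbing the bounded discrepancy between the geodesic and its circular model, $\log(z_d(t)/s)$ where $z_d(t)$ is the Euclidean height of the point at hyperbolic parameter $t$ along the ray.

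Next I would extract the elementary estimates: the height $z_d(t)$ along a geodesic half-circle of Euclidean radius $\rho$ (the radius being comparable to $\|g(\eta) - g(0_\infty)\|$, i.e. to the Euclidean diameter of the half-circle) rises from near $0$ up to $\rho$ and back down, with $\log z_d(t) \asymp_\plus \min(t - \log(1/\rho)\;,\; \text{const} - t)$ — the first branch governing the ascent toward the top of the circle and the second the descent toward $g(\eta)$. Translating the parameters $s$ and $\rho$ back through $g$ using the standard relations between Euclidean data at $\xi$ and at $\infty$: one has $s \asymp_\times 1/r_\xi$ after the appropriate normalization (the height of the image horoball is reciprocal to the Euclidean radius of $H_\xi$), and the diameter of the half-circle through $g(0)$ and $g(\eta)$ corresponds to $\|\xi - \eta\|^{-2}$ up to multiplicative constants by the conformal distortion formula $\|g(x) - g(y)\| \asymp_\times \|x-y\|/(\|\xi - x\|\,\|\xi-y\|)$ applied at $x = \eta$, $y$ near $\xi$. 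Plugging these in, $\log z_d(t) - \log s \asymp_\plus \min\big(t - \log(1/r_\xi)\,,\, 2\log(1/\|\xi-\eta\|) - \log(1/r_\xi) - t\big)$, which is the claimed formula. A cleaner bookkeeping device is to note that the two expressions inside the $\min$ are: (a) the signed distance one has traveled into the horoball measured from the entry point on the near side, and (b) the signed distance remaining before exiting on the far side (or, if the geodesic does not exit, before reaching its nearest approach to $\xi$); since $\eta\ne\xi$, the ray does enter and leave $H_\xi$, so $b(\eta_t)$ is controlled by whichever of these is smaller.

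The main obstacle is purely one of normalization: making sure that the Euclidean radius $r_\xi$ of the horoball $H_\xi\subset\B^d$, which is measured in the disk model with its particular boundary sphere, is correctly matched to the height of the image horosphere in $\H^d$ after the Cayley-type transform, and similarly that the additive error terms coming from (i) replacing the true geodesic ray from $0$ by a circular arc hitting $\del\H^d$ exactly at $g(\eta)$, and (ii) the bounded distortion of $g(0)$'s position, are genuinely bounded by a constant depending only on $d$ (and not on $\xi$, $\eta$, or $t$). Since $H_\xi$ is assumed not to contain the origin, $g(0)$ stays a bounded hyperbolic distance from the relevant part of the configuration, so these errors are indeed uniformly bounded; this is the step requiring a little care but no real difficulty. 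Everything else is the standard dictionary between the two models of hyperbolic space.
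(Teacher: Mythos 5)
Your strategy -- pass to the upper half-space model by a M\"obius map $g$ with $g(\xi)=\infty$ and $g(0)$ at height $1$, observe that the penetration depth is exactly $\log(z_d(t)/s)$, and read off the height profile along the image of the ray from $0$ to $\eta$ -- is viable, and is essentially a coordinate version of the paper's intrinsic argument (there $\log z_d$ is played by the Busemann function $\busemann_\xi(0,\cdot)$, and the height profile is obtained from thinness of the triangle $\Delta(0,\eta,\xi)$ and the Gromov product rather than from the explicit semicircle). But the central quantitative step is wrong as you state it. The Euclidean diameter of the semicircle carrying $g([0,\eta))$ is comparable to $\|\xi-\eta\|^{-1}$, not $\|\xi-\eta\|^{-2}$: applying the distortion formula to the two endpoints $\eta$ and $-\eta$ of the full geodesic through $0$ and $\eta$ gives diameter $\asymp_\times \|\eta-(-\eta)\|/(\|\xi-\eta\|\,\|\xi+\eta\|)\asymp_\times \|\xi-\eta\|^{-1}$ when $\|\xi-\eta\|$ is small; equivalently $\log\rho\asymp_\plus\log(1/\|\xi-\eta\|)$. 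Likewise your height asymptotic $\log z_d(t)\asymp_\plus\min\big(t-\log(1/\rho),\,\mathrm{const}-t\big)$ is mis-stated: with $t$ the hyperbolic arclength from $g(0)$ (height $\asymp_\times 1$), one has $\log z_d(t)\asymp_\plus\min(t,\,2\log\rho-t)$ -- the ascent branch carries no $\log\rho$ term, and the $2\log\rho$ on the descent branch (the height is $\rho\operatorname{sech}$ of the arclength from the top, and the top is reached at time $\asymp_\plus\log\rho$) is what produces the factor $2\log(1/\|\xi-\eta\|)$ in the lemma; it does not come from a squared diameter. As written, your ingredients combine to $\min\big(t+2\log(1/\|\xi-\eta\|)-\log(1/r_\xi),\,\mathrm{const}-t-\log(1/r_\xi)\big)$, not the claimed estimate, and the unspecified ``const'' in your descent branch is precisely the quantity the lemma is about, so asserting the final display at that point is a genuine gap.

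Once corrected ($s\asymp_\times 1/r_\xi$ under the normalization $g(0)$ at height $1$; $\rho\asymp_\times\max(1,\|\xi-\eta\|^{-1})$; $\log z_d(t)\asymp_\plus\min(t,2\log\rho-t)$), the computation does prove the lemma with constants depending only on $d$, since the inversion centered at $\xi$ has absolute distortion constants and $0\notin H_\xi$ keeps $g(0)$ on or below the image horosphere. Two smaller points: the image of the ray from $0$ to $\eta$ is exactly an arc of a semicircle ending at $g(\eta)$, so no ``bounded initial segment'' correction is needed there; and the normalization of $g$ should be stated explicitly, since both the identification $s\asymp_\times 1/r_\xi$ and the height profile depend on placing $g(0)$ at height comparable to $1$.
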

\begin{proof}
Let $\busemann_\xi$ denote the Busemann function of $\xi$, i.e.
\[
\busemann_\xi(y,z) = \lim_{x\to\xi} [\dist(x,y) - \dist(x,z)],
\]
where $\dist$ denotes the hyperbolic distance in $\B^d$. Then
\begin{equation}
\label{detat1}
b(\eta_t) = \busemann_\xi(0,\eta_t) - \dist(0,H_\xi) \asymp_\plus \busemann_\xi(0,\eta_t) - \log\frac{1}{r_\xi}\cdot
\end{equation}
To estimate $\busemann_\xi(0,\eta_t)$, we use the well-known fact that the incircle radius of a (possibly ideal) hyperbolic triangle is uniformly bounded. Specifically, when we consider the triangle $\Delta = \Delta(0,\eta,\xi)$ and let $\xmod\in\geo0\eta$, $\ymod\in\geo0\xi$, $\zmod\in\geo\xi\eta$ be the points of $\Delta$ tangent to the incircle, then $\dist(\xmod,\ymod) \asymp_\plus \dist(\xmod,\zmod) \asymp_\plus 0$. Here $\geo pq$ denotes the geodesic (or geodesic ray) connecting $p$ and $q$. Write $s = \dist(0,\xmod)$, and for each $u > 0$ let $\geo \zmod\eta_u$ denote the unique point on the geodesic $\geo \zmod\eta$ such that $\dist(\zmod,\geo \zmod\eta_u) = u$. Then
\[
\begin{cases}
\dist(\eta_t,\xi_t) \asymp_\plus 0 & \text{ if $t \leq s$}\\
\dist(\eta_t,\geo \zmod\eta_{t - s}) \asymp_\plus 0 & \text{ if $t \geq s$}
\end{cases}
\]
and thus
\begin{equation}
\label{detat2}
\busemann_\xi(0,\eta_t)
\begin{cases}
\asymp_\plus \busemann_\xi(0,\xi_t) = t & \text{ if $t \leq s$}\\
\asymp_\plus \busemann_\xi(0,\geo \zmod\eta_{t - s}) \asymp_\plus 2\lb \xi|\eta\rb_0 - t & \text{ if $t \geq s$}
\end{cases}.
\end{equation}
Here $\lb \xi|\eta\rb_0$ denotes the \emph{Gromov product}
\[
\lb \xi|\eta\rb_0 = \lim_{\substack{x\to \xi \\ y\to\eta}} \frac12[\dist(0,x) + \dist(0,y) - \dist(x,y)].
\]
Plugging $t = s$ into \eqref{detat2} gives $s \asymp_\plus \lb \xi|\eta\rb_0$, and thus
\[
\busemann_\xi(0,\eta_t) \asymp_\plus \min(t,2\lb \xi|\eta\rb_0 - t).
\]
Combining with \eqref{detat1} together with the well-known asymptotic
\[
\lb \xi|\eta\rb_0 \asymp_\plus \log\frac{1}{\|\xi - \eta\|}
\]
completes the proof.
\end{proof}

\begin{lemma}
\label{lemmaHPconformal}
Let $G$ be a Kleinian group, and let $\psi$ be a Hausdorff gauge function satisfying \eqref{Psiassumption2}. Then the measures $\nu_1 = \HH^\psi\given\Lambda$ and $\nu_2 = \PP^\psi\given\Lambda$ are $\delta$-conformal with respect to $G$.
\end{lemma}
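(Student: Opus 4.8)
The plan is to reduce the assertion that $\nu_1$ and $\nu_2$ are $\delta$-conformal to the change-of-variables formulas
\[
\HH^\psi(g(A)) = \int_A |g'(\xi)|^\delta\dee\HH^\psi(\xi)
\qquad\text{and}\qquad
\PP^\psi(g(A)) = \int_A |g'(\xi)|^\delta\dee\PP^\psi(\xi),
\]
valid for every $g\in G$ and every Borel set $A\subset\Lambda$. Since a measure $\nu$ on $\del\B^d$ is $\delta$-conformal with respect to $G$ precisely when $\nu(g(A)) = \int_A|g'|^\delta\dee\nu$ for all $g\in G$ and Borel $A$, and since $\Lambda$ is compact and $G$-invariant and $\nu_1,\nu_2$ are supported on $\Lambda$, these two formulas are exactly the transformation rule we want. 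The whole point of hypothesis \eqref{Psiassumption2} is that it makes $\psi$ \emph{regularly varying of index $\delta$}, uniformly on compact sets of scaling ratios: writing $\psi(cr)/\psi(r) = c^\delta\exp(\Psi(\log(1/r) - \log c) - \Psi(\log(1/r)))$ and applying the mean value theorem to $\Psi$, the exponent equals $-\log(c)\,\Psi'(\tau)$ for some $\tau$ between $\log(1/r)-\log c$ and $\log(1/r)$, hence with $\tau\to\infty$ as $r\searrow 0$ uniformly for $c$ in a fixed compact range; so it tends to $0$. I would first record the resulting estimate: for every $\epsilon>0$ there is $r_0>0$ with $(1-\epsilon)c^\delta\psi(r)\leq\psi(cr)\leq(1+\epsilon)c^\delta\psi(r)$ whenever $0<r\leq r_0$ and $c$ lies in the prescribed range.

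Next I would fix $g\in G$ and use that, as a M\"obius transformation, $g$ restricts to a $C^1$ diffeomorphism of $\del\B^d = S^{d-1}$ with $|g'|$ bounded above and below by positive constants. Using compactness of $\Lambda$, cover it by finitely many balls $B(\xi_i,\rho_i)$ so small that on each one $|g'|$ varies by at most a factor $1+\epsilon$ and $g$ distorts diameters by at most a factor $1+\epsilon$ away from the similarity $\zeta\mapsto g(\xi_i)+|g'(\xi_i)|(\zeta-\xi_i)$; let $\{E_i\}$ be the induced finite Borel partition of $\Lambda$, so $E_i\subset B(\xi_i,\rho_i)$. Combining the regular-variation estimate with this ``approximate similarity'' property gives $\psi(\diam g(U)) = (1+O(\epsilon))\,|g'(\xi_i)|^\delta\,\psi(\diam U)$ for sets $U\subset E_i$ of small diameter; since $g$ and $g^{-1}$ are bi-Lipschitz homeomorphisms they interchange fine covers of $A\cap E_i$ with fine covers of $g(A\cap E_i)$, and feeding this into the definition of $\HH^\psi$ yields $\HH^\psi(g(A\cap E_i)) = (1+O(\epsilon))\,|g'(\xi_i)|^\delta\,\HH^\psi(A\cap E_i)$, while the choice of $E_i$ gives $\int_{A\cap E_i}|g'|^\delta\dee\HH^\psi = (1+O(\epsilon))\,|g'(\xi_i)|^\delta\,\HH^\psi(A\cap E_i)$. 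Summing over the finitely many $i$ (using injectivity of $g$) and then letting $\epsilon\searrow 0$ yields the first change-of-variables formula; the argument is valid and the identity meaningful whether or not $\HH^\psi(\Lambda)$ is finite, since corresponding quantities are simultaneously finite or infinite.

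For the packing measure I would run the same partition argument, with ``countable disjoint families of balls centred in the set'' in place of ``fine covers''. The one point needing care—and the step I expect to be the main obstacle—is that a disjoint family of balls centred in $g(A\cap E_i)$ pulls back under $g^{-1}$ to a family of sets centred in $A\cap E_i$ that are only \emph{approximately} round; one absorbs this by noting that $g(B(\zeta,r))$ lies between $B(g\zeta,(1-\epsilon)|g'(\xi_i)|r)$ and $B(g\zeta,(1+\epsilon)|g'(\xi_i)|r)$ and that $\psi((1\pm\epsilon)r) = (1+O(\epsilon))\psi(r)$ by regular variation, so disjoint ball-packings transform, up to a $(1+O(\epsilon))$ factor in each $\psi(\diam\cdot)$ term, into disjoint ball-packings. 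This gives $\w\PP^\psi(g(A\cap E_i)) = (1+O(\epsilon))\,|g'(\xi_i)|^\delta\,\w\PP^\psi(A\cap E_i)$, the same relation then passes to $\PP^\psi$ after taking the infimum over countable decompositions, and summing over $i$ and letting $\epsilon\searrow 0$ completes the proof. Apart from this bookkeeping, the argument is the classical fact that a conformal map multiplies $\HH^s$ and $\PP^s$ by $|g'|^s$, carried over with $s$ replaced by the regularly-varying gauge $\psi$.
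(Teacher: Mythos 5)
Your proposal is correct and follows essentially the same route as the paper: deduce from \eqref{Psiassumption2} that $\psi(\lambda r)/\psi(r)\to\lambda^\delta$, control the local distortion of the M\"obius map $g$, compare covers (for $\HH^\psi$) and disjoint ball packings (for $\PP^\psi$) under $g$, and conclude via a partition on which $|g'|$ varies by at most $1+\epsilon$, letting $\epsilon\searrow 0$. The only cosmetic differences are that the paper gets the packing-disjointness step by rescaling radii by the Lipschitz constant (and uses the geometric mean value theorem for M\"obius maps plus symmetry in place of your two-sided approximate-similarity estimate), which is an equally valid bookkeeping choice.
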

\begin{proof}
Fix $\lambda > 0$, and note that \eqref{Psiassumption2} guarantees that
\begin{equation}
\label{HP1}
\lim_{r\searrow 0} \frac{\psi(\lambda r)}{\psi(r)} = \lambda^\delta.
\end{equation}
Now let $A\subset\Lambda$ be a Borel set on which $g\in G$ is $\lambda$-Lipschitz continuous. If $(U_i)_1^\infty$ is a countable cover of $A$ satisfying $\diam(U_i)\leq \epsilon\all i$, then $(\w U_i := g(U_i\cap A))_1^\infty$ is a cover of $g(A)$ satisfying $\diam(\w U_i) \leq \lambda\epsilon\all i$ and
\[
\sum_{i = 1}^\infty \psi(\diam(\w U_i)) \leq \left(\sup_{r\leq \epsilon}\frac{\psi(\lambda r)}{\psi(r)}\right)\sum_{i = 1}^\infty \psi(\diam(U_i)).
\]
Taking the infimum, letting $\epsilon\to 0$, and using \eqref{HP1} shows that $\nu_1(g(A)) \leq \lambda^\delta \nu_1(A)$. Similarly, if $(B_i = B(x_i,r_i))_1^\infty$ is a countable disjoint collection of balls with centers in $g(A)$ satisfying $\diam(B_i)\leq\epsilon\all i$, then $(\w B_i := B(g^{-1}(x_i),r_i/\lambda))_1^\infty$ is a disjoint\footnote{Here we use the fact that for all $x_1,x_2\in\R^d$ and $r_1,r_2 > 0$, $B(x_1,r_1)$ is disjoint from $B(x_2,r_2)$ if and only if $\dist(x_1,x_2)\geq r_1 + r_2$.} collection of balls with centers in $A$ satisfying $\diam(\w B_i) \leq \epsilon/\lambda \all i$ and
\[
\sum_{i = 1}^\infty \psi(\diam(B_i)) \leq \left(\sup_{r\leq \epsilon/\lambda}\frac{\psi(\lambda r)}{\psi(r)}\right)\sum_{i = 1}^\infty \psi(\diam(\w B_i)).
\]
Taking the supremum, letting $\epsilon\to 0$, and using \eqref{HP1} shows that $\nu_2(g(A)) \leq \lambda^\delta \nu_2(A)$. So
\[
\nu_i(g(A)) \leq \lambda^\delta \nu_i(A) \;\; (i = 1,2).
\]
Setting $\lambda = \sup_A |g'|$ and using the geometric mean value theorem gives
\begin{equation}
\label{HP2}
\nu_i(g(A)) \leq \sup_A |g'|^\delta \nu_i(A).
\end{equation}
Now let $B\subset\Lambda$ be an arbitrary Borel set, and fix $g\in G$. Fix $\epsilon > 0$, and let $\AA$ be a partition of $\Lambda$ such that $\sup_A |g'|/\inf_A |g'| \leq 1 + \epsilon$ for all $A\in \AA$. We have
\begin{align*}
\nu_i(g(B)) = \sum_{A\in\AA} \nu_i(g(B\cap A))
&\leq \sum_{A\in\AA} \sup_A |g'|^\delta \nu_i(B\cap A) \by{\eqref{HP2}}\\
&\leq (1 + \epsilon) \sum_{A\in\AA} \inf_A |g'|^\delta \nu_i(B\cap A) \leq (1 + \epsilon) \int_B |g'|^\delta \;\dee \nu_i. \noreason
\end{align*}
Letting $\epsilon$ tend to zero shows that $\nu_i(g(B)) \leq \int_B |g'|^\delta \;\dee\nu_i$. The reverse inequality is proved similarly.
\end{proof}

\bibliographystyle{amsplain}

\bibliography{bibliography}

\end{document}